\newtheorem{theorem}{Theorem}
\newtheorem{lemma}[theorem]{Lemma}
\newtheorem{proposition}[theorem]{Proposition}
\theoremstyle{definition}
\newtheorem{definition}[theorem]{Definition}
\newtheorem{remark}[theorem]{Remark}
\newtheorem{construction}[theorem]{Construction}
\newcommand{\C}{\mathbb{C}}
\newcommand{\B}{\mathcal{B}}
\newcommand{\BIBD}{\mathrm{BIBD}}
\newcommand{\PBD}{\mathrm{PBD}}
\newcommand{\R}{\mathbb{R}}
\newcommand{\maxi}{\mathrm{max}}
\newcommand{\MIP}{\mathrm{MIP}}
\newcommand{\MUB}{\mathrm{MUB}}
\newcommand{\mini}{\mathrm{min}}
\begin{document}

\title{\textbf{\Large{An asymptotic existence result on compressed sensing matrices}}}

\author{
\textsc{Darryn Bryant}
				\thanks{\textit{E-mail: db@maths.uq.edu.au}}\\
\textsc{Padraig \'O Cath\'ain}
				\thanks{\textit{E-mail: p.ocathain@gmail.com}}\\
\textit{\footnotesize{School of Mathematics and Physics,}}\\
\textit{\footnotesize{The University of Queensland, QLD 4072, Australia.}}\\
}

\maketitle
\begin{center}
\begin{abstract}
\noindent
For any rational number $h$ and all sufficiently large $n$ we give a deterministic construction for an $n\times \lfloor hn\rfloor$ compressed sensing matrix with $(\ell_1,t)$-recoverability where $t=O(\sqrt{n})$. Our method uses pairwise balanced designs and complex Hadamard matrices in the construction of $\epsilon$-equiangular frames, which we introduce as a generalisation of equiangular tight frames. The method is general and produces good compressed sensing matrices from any appropriately chosen
pairwise balanced design. The $(\ell_1,t)$-recoverability performance is specified as a simple function of the parameters of the design. To obtain our asymptotic existence result we prove new results on the existence of pairwise balanced designs in which the numbers of blocks of each size are specified.
\end{abstract}

\end{center}

\vspace{0.3cm}

\noindent
{\bf 2010 Mathematics Subject classification:} 05B05, 42C15, 94A08

\noindent
{\bf Keywords:} compressed sensing, pairwise balanced designs.

\clearpage
\section{Introduction}\label{CSsection}

Compressed sensing is an approach to data sampling under the hypothesis
that all observations are drawn from a set $X$ of $t$-sparse vectors
in $\R^{N}$, where a vector is said to be {\em $t$-sparse} if it has
at most $t$ non-zero entries, and $t << N$. It has attracted a lot of
attention in the statistics, signal processing, optimization and
computer science literature \cite{CandesCS,DonohoCS} (an extensive
bibliography is maintained at \texttt{http://dsp.rice.edu/cs}).
It is often possible to recover arbitrary $x \in X$ with far fewer
than $N$ linear measurements. Note that while the measurements are linear,
the reconstruction algorithm may be complex and non-linear. We will address
computational and algorithmic aspects of our construction in a subsequent paper.

The problem then is to design an $n\times N$ matrix $\Phi$, with $n$
small relative to $N$, such that the matrix equation $\Phi x = y$
has a unique solution for every $t$-sparse vector $x$. This can be phrased
in terms of $\ell_{0}$-minimization\footnote{Recall that for $p>0$ the $\ell_{p}$
norm of $v$ is $\left(\sum_{1\leq i\leq n} |v_{i}|^{p}\right)^{\frac{1}{p}}$.
The $\ell_{0}$ `norm' of a vector $v$ is given by the limit as $p\rightarrow 0$
of the norm $\ell_{p}$. It counts the number of non-zero entries in $v$.
While it is not a true norm, it is convenient to abuse notation in order
to facilitate a comparison with the $\ell_{1}$ norm.}.
Unfortunately, it is known that $\ell_{0}$-minimization is NP-hard \cite{Natarajan},
so we expect that efficient polynomial-time algorithms for this problem do not exist.
A novelty of compressed sensing lies in the surprising connection between $\ell_{0}$-minimization
and $\ell_{1}$-minimization \cite{CandesRombergTaoStableRecovery}, which
is essentially linear programming, and for which efficient algorithms are available.

\begin{definition}
A matrix $\Phi$ is said to have {\em $(\ell_{1}, t)$-recoverability} if and only if
for any $t$-sparse vector $x$ with $\Phi x = y$, the solution of the linear programming
problem $\Phi \overline{x} = y$ of minimal $\ell_{1}$-norm is equal to $x$.
\end{definition}

\subsection{Mutual incoherence parameters and the Welch bound}

Given a matrix $\Phi$, finding the maximum value of $t$ for which $\Phi$ has
$(\ell_{1}, t)$-recoverability appears to be a computationally difficult problem.
The restricted isometry property (RIP) of Cand\`es, Romberg and Tao is one approach
which has proved to be useful in the analysis of probabilistic constructions \cite{CandesRombergTaoStableRecovery}.
For example, their methods can be used to show that $n\times N$ matrices whose entries
are drawn independently from a Gaussian distribution have the $(\ell_{1}, t)$-recoverability
property with high probability for $t \sim n/\log(N)$, a result which is best possible.
Evaluating the RIP parameters of order $t$ for a matrix $\Phi$ requires knowledge of the
largest and smallest eigenvalues of all principal $t\times t$-submatrices of the $N \times N$
Gram matrix $\Phi^{\top}\Phi$. While random matrix theory can supply this information for
random matrices, applications of RIP have been less successful for deterministic constructions.

Instead, the mutual incoherence parameters (MIP) approach seems to be a standard tool for
establishing $(\ell_{1}, t)$-recoverability results for deterministic compressed sensing
matrices, see \cite{DonohoHuo}. The mutual incoherence parameter $\MIP(\Phi)$ of a matrix
$\Phi$ is defined by
\begin{equation}\nonumber
\MIP(\Phi) = \maxi \frac{|\langle c_{i}, c_{j}\rangle|}{|c_{i}||c_{j}|}
\end{equation}
for distinct columns $c_{i}$ and $c_{j}$ of $\Phi$, where the norm here is $\ell_{2}$.
The following theorem provides the basis for the MIP approach to establishing
$(\ell_{1}, t)$-recoverability.

\begin{theorem}[Proposition 1, \cite{Bourgain}]\label{DonohoHu}
If $\MIP(\Phi) = \mu$, then $\Phi$ has $(\ell_{1}, t)$-recoverability
for all $t < \frac{1}{2\mu} + \frac{1}{2}$.
\end{theorem}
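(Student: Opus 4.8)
The plan is to work at the level of the null space of $\Phi$: a competing $\ell_1$-solution differs from the true sparse vector by an element of $\ker\Phi$, and the incoherence hypothesis will forbid such an element from concentrating on a small set of coordinates. Throughout I take the columns $c_1,\dots,c_N$ of $\Phi$ to have unit $\ell_2$-norm — the usual convention when the hypothesis is phrased through $\MIP$, and one that does not change $\mu$ — so that the Gram matrix $G=\Phi^{\top}\Phi$ has every diagonal entry equal to $1$ and every off-diagonal entry bounded in modulus by $\mu$.

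First I would reduce the statement to a concentration inequality for null vectors. Suppose $x\in\R^{N}$ is $t$-sparse with $\Phi x=y$, put $S=\mathrm{supp}(x)$ so that $|S|\le t$, and let $\overline{x}$ be any feasible vector, $\Phi\overline{x}=y$, with $\overline{x}\neq x$. Then $v:=\overline{x}-x$ is a nonzero element of $\ker\Phi$, and since $x$ vanishes off $S$ the triangle inequality gives
\begin{equation}\nonumber
\|\overline{x}\|_1=\sum_{i\in S}|x_i+v_i|+\sum_{i\notin S}|v_i|\ \ge\ \|x\|_1-\|v_S\|_1+\|v_{S^{c}}\|_1 ,
\end{equation}
where $v_S$ is the restriction of $v$ to $S$. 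Hence it is enough to prove
\begin{equation}\nonumber
\|v_S\|_1<\|v_{S^{c}}\|_1,\qquad\text{equivalently}\qquad \|v_S\|_1<\tfrac12\|v\|_1 ,
\end{equation}
for every nonzero $v\in\ker\Phi$ and every $S$ with $|S|\le t$; this yields $\|\overline{x}\|_1>\|x\|_1$, so $x$ is the unique $\ell_1$-minimiser.

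The key step is the coherence estimate. If $\Phi v=0$ then $Gv=\Phi^{\top}\Phi v=0$, so reading off the $i$th coordinate, $v_i=-\sum_{j\neq i}G_{ij}v_j$ and therefore $|v_i|\le\mu\sum_{j\neq i}|v_j|=\mu(\|v\|_1-|v_i|)$ for each $i$. Summing over $i\in S$ gives
\begin{equation}\nonumber
\|v_S\|_1\ \le\ \mu\bigl(|S|\,\|v\|_1-\|v_S\|_1\bigr)\ \le\ \mu\bigl(t\,\|v\|_1-\|v_S\|_1\bigr),
\end{equation}
so that $\|v_S\|_1\le\frac{\mu t}{1+\mu}\|v\|_1$. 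The hypothesis $t<\frac{1}{2\mu}+\frac12=\frac{1+\mu}{2\mu}$ is precisely $\frac{\mu t}{1+\mu}<\frac12$, which gives $\|v_S\|_1<\tfrac12\|v\|_1$ and finishes the argument.

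I do not anticipate a real obstacle: the proof is short once one passes to $\ker\Phi$. The two points requiring care are keeping every inequality strict, so that the strict bound on $t$ delivers genuine uniqueness of the minimiser rather than mere optimality, and the unit-norm normalisation of the columns, without which the statement is false. Finally, it is worth observing that the only property of $\Phi$ used is that $\mu$ bounds every off-diagonal entry of $G$; the same proof therefore applies unchanged to any matrix whose columns form a frame with all pairwise inner-product moduli at most $\mu$, which is the setting in which the theorem will be invoked later.
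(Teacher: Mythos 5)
Your proof is correct. Note, however, that the paper does not prove this statement at all: it is imported verbatim as Proposition~1 of the cited work of Bourgain et al.\ (and goes back to Donoho--Huo and Donoho--Elad), so there is no internal proof to compare against. What you have written is a clean, self-contained derivation of that cited result by the standard route: reduce $\ell_1$-recovery to the null space property $\|v_S\|_1<\tfrac12\|v\|_1$ for all nonzero $v\in\Ker\Phi$ and all $|S|\le t$, then extract from $\Phi^{\top}\Phi v=0$ the coordinatewise bound $|v_i|\le\mu(\|v\|_1-|v_i|)$ and sum over $S$ to get $\|v_S\|_1\le\frac{\mu t}{1+\mu}\|v\|_1$, which is $<\tfrac12\|v\|_1$ exactly when $t<\frac{1}{2\mu}+\tfrac12$. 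All the steps check out, the strictness is handled properly (the final inequality is strict because $\|v\|_1>0$), and the unit-norm normalisation is correctly flagged as necessary. One small remark for consistency with the rest of the paper: the frames constructed there have complex entries (complex Hadamard matrices and MUBs), so you should phrase the Gram-matrix step with the conjugate transpose $\Phi^{\dagger}\Phi$ and moduli of inner products; the argument is otherwise unchanged. Your closing observation --- that the only input is a uniform bound on the off-diagonal Gram entries --- is exactly why the theorem combines with the $\epsilon$-equiangularity estimates later in the paper.
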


The Welch bound \cite{Welch} states that if $\Phi$ is any $n\times N$ matrix with
columns of unit norm, then
\begin{equation}\label{WelchBoundEquation}
\MIP(\Phi) \geq \mu_{n,N} = \sqrt{\frac{N-n}{(N-1)n}}.
\end{equation}
Combining the Welch bound with Theorem \ref{DonohoHu}, we see that if $(\ell_{1}, t)$-recoverability
can be established for an $n\times N$ matrix $\Phi$ using the MIP approach, then
\begin{equation}\label{restriction}
t \leq \frac{\sqrt{n}}{2}\frac{\sqrt{N-1}}{\sqrt{N-n}} + \frac{1}{2}.
\end{equation}
Provided that $N > \frac{4n-1}{3}$, we have that $t < \sqrt{n}$.
It should be emphasised that a matrix $\Phi$ may support $(\ell_{1}, t)$-recovery for
values of $t$ greater than the bound in (\ref{restriction}), but the MIP framework is
too coarse a tool to establish this. This shortcoming of the MIP approach is known as
the \textit{square-root bottleneck}, and overcoming it requires a fundamentally different
approach to estimating $(\ell_{1}, t)$-recoverability.

For restricted ranges of parameters, there are many known constructions for compressed
sensing matrices close to the square-root bottleneck. Indeed, the first deterministic
construction for compressed sensing matrices by de Vore reached this limit \cite{deVore}.
The only progress towards overcoming the square-root bottleneck is a deep paper of Bourgain
et al. \cite{Bourgain} in which $(\ell_{1}, t)$-recoverability for $t \sim n^{\frac{1}{2} + \epsilon}$
is achieved. Instead of attempting to exceed this threshold, in this paper we will show that matrices
reaching the square-root bottleneck are plentiful and easily constructed.

\section{$\epsilon$-equiangular frames}\label{constructions}

Matrices in which $\MIP(\Phi) = \mu_{n,N}$ (see Equation \eqref{WelchBoundEquation})
are called \textit{equiangular tight frames} (ETFs). We give an overview of some of
their properties in this section. We then introduce a generalisation which we call
$\epsilon$-equiangular frames. If $V$ is a finite dimensional inner product space and
$T$ is a finite subset of $V$, then $T$ is a frame if and only if $T$ spans $V$
(the term originates in functional analysis, and needs refinement if $T$ is infinite).

A frame $T$ is \textit{tight} if there exists a constant $\alpha$ such that for any $v \in V$
\[ \sum_{x\in T} |\langle x,v \rangle|^{2} = \alpha | v|_{2}^{2}, \]
and is \textit{equiangular} if there exists some $\mu \in \mathbb{R}$ such that for
all distinct $x_{i}, x_{j} \in T$
\[ |\langle x_{i}, x_{j} \rangle| = \mu. \]
Note that if $\Phi$ is an $n \times N$ ETF, then elementary arguments
show that any pair of columns necessarily has inner product $\mu_{n,N}$,
see \cite{ExistenceETFs} for a survey of ETFs.

It is known that if $T$ is an ETF in $V$, then $|T| \leq \binom{\dim(V)}{2}$ if
$V$ is real, or $|T| \leq \dim(V)^{2}$ if $V$ is complex. In the case that all
entries of $T$ are contained in some proper subfield of $\C$ (e.g. the rationals or a
cyclotomic field), number theoretic constraints can be used to rule out the existence of
certain large ETFs \cite{ExistenceETFs}. The theory of equiangular tight frames has
applications in quantum computing and functional analysis, however we are interested
in its application to compressed sensing. By Theorem \ref{DonohoHu}, an $n \times N$
ETF has $(\ell_{1},  t)$-recoverability for all
$t \leq \frac{\sqrt{n}}{2}\frac{\sqrt{N-1}}{\sqrt{N-n}} + \frac{1}{2}$,
and so reaches the square-root bottleneck.

Now, for any columns $c_{i}$ and $c_{j}$ of the matrix $\Phi$, let $\mu(c_{i}, c_{j}) = \frac{|\langle c_{i}, c_{j}\rangle|}{|c_{i}||c_{j}|}$
denote the normalised inner product of the vectors $c_i$ and $c_j$ (unless specified otherwise, all measurements are with the $\ell_{2}$-norm).

\begin{definition}\label{epsilonequiangular}
Let $\Phi$ be an $n\times N$ frame and let $\mu_{n,N}$ be the Welch bound.
We say that $\Phi$ is $\epsilon$-equiangular if
\[(1-\epsilon)\mu_{n,N} \leq \mu(c_{i}, c_{j}) \leq (1+\epsilon)\mu_{n,N}\]
for any two distinct columns $c_{i}$ and $c_{j}$ of $\Phi$.
\end{definition}

Our interest in $\epsilon$-equiangular frames stems from the following straightforward result.

\begin{proposition}\label{epscomp}
If $\Phi$ is an $\epsilon$-equiangular frame, then $\Phi$ has the $(\ell_{1}, t)$-recovery property for all $t \leq \frac{\sqrt{n}}{2(1+\epsilon)}$.
\end{proposition}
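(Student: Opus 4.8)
The plan is to reduce Proposition~\ref{epscomp} to Theorem~\ref{DonohoHu} by bounding $\MIP(\Phi)$ in terms of $\epsilon$ and the Welch bound. First I would observe that by the definition of an $\epsilon$-equiangular frame, every normalised inner product $\mu(c_i,c_j)$ of distinct columns satisfies $\mu(c_i,c_j)\leq (1+\epsilon)\mu_{n,N}$, and taking the maximum over all such pairs gives $\MIP(\Phi)\leq (1+\epsilon)\mu_{n,N}$. This is the only property of $\epsilon$-equiangularity that is needed; the lower bound $(1-\epsilon)\mu_{n,N}$ plays no role here.

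Next I would substitute $\mu = (1+\epsilon)\mu_{n,N}$ into Theorem~\ref{DonohoHu}, which asserts $(\ell_1,t)$-recoverability for all $t < \tfrac{1}{2\mu}+\tfrac12$. Since $t\mapsto \tfrac{1}{2\mu}+\tfrac12$ is decreasing in $\mu$, the bound $\MIP(\Phi)\leq (1+\epsilon)\mu_{n,N}$ yields $(\ell_1,t)$-recoverability for all
\[
t < \frac{1}{2(1+\epsilon)\mu_{n,N}} + \frac12.
\]
Plugging in $\mu_{n,N}=\sqrt{(N-n)/((N-1)n)}$ gives $\tfrac{1}{2(1+\epsilon)}\sqrt{(N-1)n/(N-n)}+\tfrac12$, and since $(N-1)/(N-n)\geq 1$ this is at least $\tfrac{\sqrt{n}}{2(1+\epsilon)}+\tfrac12$. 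Hence recoverability holds for all $t< \tfrac{\sqrt{n}}{2(1+\epsilon)}+\tfrac12$, and in particular for all integers $t\leq \tfrac{\sqrt{n}}{2(1+\epsilon)}$, which is the claimed statement (recoverability is only meaningful for integer sparsity levels, so the strict inequality in Theorem~\ref{DonohoHu} absorbs the extra $\tfrac12$).

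There is essentially no obstacle: the proposition is a one-line consequence of the two cited facts. The only point requiring a little care is bookkeeping about whether the inequality on $t$ is strict, and whether one bounds $(N-1)/(N-n)$ below by $1$ to get the clean $\sqrt{n}$ form stated in the proposition rather than carrying the slightly larger $\sqrt{(N-1)/(N-n)}$ factor; I would simply remark that the stated bound is the weaker, cleaner one. The substance of the paper lies entirely in constructing frames that are $\epsilon$-equiangular with small $\epsilon$ for the desired aspect ratio $N=\lfloor hn\rfloor$, not in this implication.
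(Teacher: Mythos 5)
Your proposal is correct and follows essentially the same route as the paper: bound $\MIP(\Phi)\leq(1+\epsilon)\mu_{n,N}$ from the definition, feed this into Theorem \ref{DonohoHu}, and observe that the resulting threshold $\frac{1}{2(1+\epsilon)}\sqrt{(N-1)n/(N-n)}+\frac12$ dominates $\frac{\sqrt{n}}{2(1+\epsilon)}$. The paper performs the same final comparison by rewriting the radicand as $n+\frac{n(n-1)}{N-n}$ rather than by noting $(N-1)/(N-n)\geq 1$, but these are the same observation.
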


\begin{proof}
By definition, $\MIP(\Phi) \leq (1+ \epsilon)\mu_{n,N}$.
By Theorem \ref{DonohoHu}, $\Phi$ has
$(\ell_{1}, t)$-recoverability for all
\begin{equation}\label{mipbound}
t \leq \frac{\sqrt{(N-1)n}}{2(1+\epsilon)\sqrt{N-n}} + \frac{1}{2} =
\frac{1}{2(1+\epsilon)} \sqrt{n + \frac{n(n-1)}{N-n}} + \frac{1}{2}.
\end{equation}
So the result holds by observing that $\frac{1}{2(1+\epsilon)}
\sqrt{n}$ is less than the right side of \eqref{mipbound}.
\end{proof}

\begin{remark}
We observe that $2n \leq N$ implies $\frac{n(n-1)}{N-n} \leq n$,
which means that we cannot establish $(\ell_{1},t)$-recoverability
for values of $t$ larger than $\frac{1}{\sqrt{2}(1+\epsilon)} \sqrt{n}$
within the MIP framework. Thus in all cases of interest, the results that
we obtain are best possible to within a (small) multiplicative constant.
\end{remark}

Now we give a construction of $\epsilon$-equiangular frames.
(We give an alternate construction in Section \ref{altconst} which gives
$\epsilon$-equiangular frames for $\epsilon < 1$ and generalises a result
of Fickus, Mixon and Tremain \cite{Mixon}.) We begin by recalling the definition
of a pairwise balanced design. Our use of terminology is standard, and consistent
with \cite{BJL}, for example.

\begin{definition}
If $V$ is a set of $v$ {\em points} and $\B$ is a collection of subsets of $V$,
called {\em blocks}, such that each pair of points occurs together in exactly
$\lambda$ blocks for some fixed positive integer $\lambda$, then $(V,\B)$ is a
{\em pairwise balanced design}. If each block in $\B$ has cardinality in $K$,
then the notation $\PBD(v,K,\lambda)$ is used. For each point $x\in V$, the
{\em replication number} $r_x$ of $x$ is defined by $r_x=|\{B\in\B:x\in B\}|$.
\end{definition}

\begin{construction}\label{con0}
If $(V,\B)$ is a $\PBD(v,K,1)$, then let $n = |\B|$ and $N = \sum_{x \in V} r_{x}$
and define $\Phi$ to be the $n\times N$ frame constructed as follows.

\begin{itemize}
    \item Let $A$ be the transpose of the incidence matrix of $(V,\B)$: rows of $A$
    are indexed by blocks, columns of $A$ by points, and the entry in row $B$ and
    column $x$ is $1$ if $x\in B$ and $0$ otherwise.
    \item For each $x \in V$ let $H_{x}$ be a (possibly complex)
    Hadamard matrix of order $r_{x}$ (see e.g. Section 2.8 of \cite{deLauneyFlannery}).
    \item For each $x \in V$, column $x$ of $A$ determines $r_{x}$
    columns of $\Phi$: each zero in column $x$ is replaced with the $1 \times r_{x}$
    row vector $(0,0,\ldots,0)$, and each $1$ in column $x$ is replaced with a distinct row of $\frac{1}{\sqrt{r_{x}}}H_{x}$.
\end{itemize}
\end{construction}

By Theorem II.2.6 of \cite{BJL}, the rows of $\Phi$ span $\mathbb{R}^{n}$, and so
$\Phi$ is a frame. The substitution of one matrix into another in Construction
\ref{con0} is similar to the column replacement techniques considered in \cite{StrengtheningHash}.
Note that column $x$ of $A$ has precisely $r_{x}$ 1s, which is the number of rows in $H_{x}$.
A standard counting argument implies that $N = \sum_{x \in V} r_{x} = \sum_{B \in \B} |B|$,
so the number of rows in $\Phi$ is the number of blocks in $\B$ and the number of columns is
the sum of the sizes of the blocks in $\B$. These parameters are not directly dependent on $v$,
or on the sizes of individual blocks. Furthermore, we observe that there exist complex Hadamard
matrices of order $r$ for each natural number $r$; the character table of a cyclic group of order $r$
will suffice for the purposes of our construction. (See \cite{LiebeckJames} for the definition
and properties of a character table.)

Lemma \ref{normlemma} shows that while the frames given by Construction \ref{con0} are not tight,
a small modification makes them so. In Proposition \ref{mainthm0} we show that Construction \ref{con0}
does indeed produce $\epsilon$-equiangular frames.

\begin{lemma}\label{normlemma}
If $c\in \mathbb{R}$, $c>0$, and $\Phi$ is a frame from Construction \ref{con0},
then any frame $\Phi'$ produced by normalising every row of $\Phi$ to have length $c$ is tight.
\end{lemma}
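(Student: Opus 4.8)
The plan is to compute the frame operator of $\Phi'$ directly and show it is a scalar multiple of the identity. Recall that a frame is tight with constant $\alpha$ precisely when its frame operator $\sum_{w} w w^{*}$, the sum running over the frame vectors (here, the columns of the matrix), equals $\alpha I_{n}$; for an $n\times N$ matrix $\Psi$ this frame operator is $\Psi \Psi^{*}$, where $^{*}$ denotes conjugate transpose (inner products and norms being taken in the Hermitian sense, since the entries may be complex here). So it suffices to show $\Phi' (\Phi')^{*} = c^{2} I_{n}$.

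First I would analyse $\Phi \Phi^{*}$ for the unnormalised frame. Index the columns of $\Phi$ in $r_{x}$-blocks, one block per point $x \in V$. The $(B, B')$ entry of $\Phi \Phi^{*}$ is the sum over $x \in V$ of the Hermitian inner product of the $x$-block of row $B$ with the $x$-block of row $B'$. If $x \notin B \cap B'$, at least one of these is the zero vector and contributes $0$. If $x \in B \cap B'$, the two $x$-blocks are $\tfrac{1}{\sqrt{r_{x}}}$ times two rows of the complex Hadamard matrix $H_{x}$; by Construction \ref{con0} the $r_{x}$ blocks through $x$ receive $r_{x}$ \emph{distinct} rows of $H_{x}$, so when $B \neq B'$ these two rows are distinct, and when $B = B'$ they coincide. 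Since distinct rows of a complex Hadamard matrix of order $r_{x}$ are orthogonal while each row has squared norm $r_{x}$, the $x$-contribution is $0$ if $B \neq B'$ and $\tfrac{1}{r_{x}} \cdot r_{x} = 1$ if $B = B'$. Summing over $x$, we get that $\Phi \Phi^{*}$ is the diagonal matrix whose $(B,B)$ entry is $|B|$. In particular the $\ell_{2}$-norm of row $B$ of $\Phi$ is $\sqrt{|B|}$, so $\Phi$ itself is tight only when $\B$ has constant block size.

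To pass to $\Phi'$: normalising row $B$ to length $c$ amounts to multiplying it by $c / \sqrt{|B|}$, so $\Phi' = D \Phi$ where $D$ is the diagonal matrix with $(B,B)$ entry $c/\sqrt{|B|}$. Then
\[ \Phi' (\Phi')^{*} = D (\Phi \Phi^{*}) D^{*} = D \, \mathrm{diag}\bigl(|B| : B \in \B\bigr) \, D = \mathrm{diag}\bigl(c^{2} : B \in \B\bigr) = c^{2} I_{n}, \]
so $\Phi'$ is tight with frame constant $c^{2}$.

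The argument is essentially a bookkeeping exercise and I do not anticipate a real obstacle; the one step deserving attention is the vanishing of the off-diagonal entries of $\Phi \Phi^{*}$, which hinges on two facts working in tandem — distinct blocks through a point $x$ are assigned distinct rows of $H_{x}$, and distinct rows of a Hadamard matrix are orthogonal. It is worth noting that this computation never uses $\lambda = 1$, so the same conclusion would hold whenever Construction \ref{con0} is applied to a $\PBD(v,K,\lambda)$.
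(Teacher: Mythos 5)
Your proof is correct and follows essentially the same route as the paper's: the crux in both is that for two distinct rows, the columns of each shared point $x$ contribute the inner product of two \emph{distinct} rows of the Hadamard matrix $H_{x}$, which vanishes, while normalisation handles the unequal row norms. You merely make explicit what the paper leaves implicit (that tightness is equivalent to $\Phi'(\Phi')^{*}$ being a scalar multiple of $I_{n}$) and record the diagonal entries $|B|$ along the way; your closing observation that $\lambda=1$ is not needed for tightness is also accurate.
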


\begin{proof}
Since all rows have equal length, it suffices to verify that distinct rows of $\Phi'$ are orthogonal.
Consider two rows of $\Phi'$, labelled by blocks $B_{i}$ and $B_{j} \in \B$. Both rows are non-zero
precisely on the columns labelled by the points in $B_{i} \cap B_{j}$. Orthogonality is obvious if
$B_{i} \cap B_{j}$ is empty, so suppose otherwise. Consider the set of $r_{x}$ columns labelled by
some $x \in B_{i} \cap B_{j}$. These contain two entire rows of a Hadamard matrix, say $h_{i}$ and
$h_{j}$. The inner product restricted to these columns is of the form
$\langle \alpha h_{i}, \beta h_{j} \rangle = \alpha\beta \langle h_{i}, h_{j}\rangle = 0$.
Since the inner product on the set of columns labelled by each point is zero, the result follows.
\end{proof}

If $K$ is a set of integers, then we denote the maximum element of $K$ by $K_{\maxi}$
and the minimum element of $K$ by $K_{\mini}$.

\begin{proposition}\label{mainthm0}
If $(V,\B)$ is a $\PBD(v,K, 1)$ with $2 \leq K_{\mini}$ and $K_{\maxi} \leq \sqrt{2}(K_{\mini}-1)$,
then the frame $\Phi$ produced by Construction \ref{con0} is $1$-equiangular.
\end{proposition}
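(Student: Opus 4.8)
The plan is to compute the normalised inner product $\mu(c_i,c_j)$ of two distinct columns of $\Phi$ and show it always lies in the interval $[0,2\mu_{n,N}]$, which is exactly the condition for $1$-equiangularity. First I would observe that, by Construction \ref{con0}, each column $c_x^{(k)}$ of $\Phi$ (the $k$-th column coming from point $x$) is obtained by placing the $k$-th row of $\frac{1}{\sqrt{r_x}}H_x$ into the $r_x$ positions indexed by the blocks through $x$, and zeros elsewhere. Hence $|c_x^{(k)}|_2^2 = \frac{1}{r_x}\cdot r_x = 1$, so every column already has unit norm and the denominator in $\mu(c_i,c_j)$ is $1$. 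Two cases arise: if the two columns come from the \emph{same} point $x$, then their inner product is $\frac{1}{r_x}\langle h,h'\rangle$ for two distinct rows $h,h'$ of the Hadamard matrix $H_x$, which is $0$; so such pairs trivially satisfy the bound. If the two columns come from \emph{distinct} points $x$ and $y$, then their supports (as subsets of the block-index set) are the blocks through $x$ and the blocks through $y$ respectively, and these overlap in precisely the blocks containing both $x$ and $y$. Since $(V,\B)$ is a $\PBD(v,K,1)$, there is exactly one such block $B$. Therefore the inner product is supported on a single coordinate, and equals $\frac{1}{\sqrt{r_x r_y}}$ times a product of one entry of $H_x$ with one entry of $H_y$; as Hadamard matrix entries have modulus $1$, we get $\mu(c_x^{(\cdot)},c_y^{(\cdot)}) = \frac{1}{\sqrt{r_x r_y}}$.

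The next step is to bound $\frac{1}{\sqrt{r_x r_y}}$ above and below in terms of $\mu_{n,N}$. For this I would express $\mu_{n,N} = \sqrt{\frac{N-n}{(N-1)n}}$ and use the basic identities $n = |\B|$ and $N = \sum_{x\in V} r_x = \sum_{B\in\B}|B|$. The replication numbers are controlled by the block sizes: fixing a point $x$, counting pairs $\{x,z\}$ through $x$ against the blocks through $x$ gives $\sum_{B\ni x}(|B|-1) = v-1$, so $r_x(K_{\mini}-1) \le v-1 \le r_x(K_{\maxi}-1)$, i.e. $\frac{v-1}{K_{\maxi}-1}\le r_x \le \frac{v-1}{K_{\mini}-1}$. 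These sandwich $r_x r_y$ between $\left(\frac{v-1}{K_{\maxi}-1}\right)^2$ and $\left(\frac{v-1}{K_{\mini}-1}\right)^2$. I would then need to relate $n$ and $N$ to $v$ and the $r_x$: summing the pair-count identity over all $x$ gives $\sum_x r_x(|B_x\text{-sizes})$... more cleanly, counting incident (point, block) pairs two ways and counting pairs of points gives $\binom{v}{2} = \sum_{B}\binom{|B|}{2}$, whence $n = |\B|$ satisfies $n\binom{K_{\mini}}{2}\le \binom{v}{2}\le n\binom{K_{\maxi}}{2}$, and similarly $N = \sum_B|B|$ satisfies $n K_{\mini}\le N\le n K_{\maxi}$. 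Feeding these estimates into $\mu_{n,N}$ should show $\mu_{n,N}$ is, up to the constants governed by $K_{\mini},K_{\maxi}$, comparable to $\frac{1}{r}$ for a ``typical'' replication number $r\approx \frac{v}{K}$, and the hypothesis $K_{\maxi}\le\sqrt2(K_{\mini}-1)$ is precisely what is needed to make the ratio between the extreme values of $\frac{1}{\sqrt{r_xr_y}}$ and $\mu_{n,N}$ fit inside $[1-\epsilon,1+\epsilon]$ with $\epsilon=1$, i.e. inside $[0,2]$.

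Concretely, the lower bound is the easy half: since $\mu(c_i,c_j)\ge 0$ always and $(1-1)\mu_{n,N}=0$, the left inequality in Definition \ref{epsilonequiangular} is automatic. So the entire content is the upper bound $\frac{1}{\sqrt{r_x r_y}}\le 2\mu_{n,N}$, equivalently $\frac{1}{r_x r_y}\le \frac{4(N-n)}{(N-1)n}$. Using $r_x,r_y\ge \frac{v-1}{K_{\maxi}-1}$ it suffices to show $\frac{(K_{\maxi}-1)^2}{(v-1)^2}\le \frac{4(N-n)}{(N-1)n}$. I would bound the right-hand side below using $N\ge nK_{\mini}$ (so $\frac{N-n}{N-1}\ge \frac{n(K_{\mini}-1)}{nK_{\mini}-1}\ge$ a constant close to $\frac{K_{\mini}-1}{K_{\mini}}$, and more usefully $N-n\ge n(K_{\mini}-1)$) together with the pair-count $\binom{v}{2}\ge n\binom{K_{\mini}}{2}$, which gives $n\le \frac{v(v-1)}{K_{\mini}(K_{\mini}-1)}$ and hence $(v-1)^2 \ge (v-1)\cdot (v-1)$ can be compared with $n K_{\mini}(K_{\mini}-1)$ up to the harmless factor $\frac{v-1}{v}$; a symmetric argument using $\binom{v}{2}\le n\binom{K_{\maxi}}{2}$ gives the reverse. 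Assembling these, the desired inequality reduces to something of the shape $(K_{\maxi}-1)^2 \le 2(K_{\mini}-1)^2$ (modulo lower-order terms that vanish for large $v$, or can be absorbed since the inequality to be proved is strict in the relevant regime), which is exactly the hypothesis $K_{\maxi}\le\sqrt2(K_{\mini}-1)$.

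The main obstacle I anticipate is the bookkeeping in the second and third steps: the quantities $n$, $N$, and the individual $r_x$ are not determined by $v$, $K_{\mini}$, $K_{\maxi}$ alone but only sandwiched between the values attained by a uniform design on the largest block size and one on the smallest, and one must be careful that the worst case for the lower bound on $r_x r_y$ is consistent with the worst case for the lower bound on $\mu_{n,N}$ — i.e. that the inequalities can all be pushed in the same direction simultaneously. A clean way to avoid juggling $n$ and $N$ separately is to note that $\mu_{n,N}^2 = \frac{1}{n}-\frac{1}{N}\cdot\frac{N-n}{N-1}\cdot\frac{n}{\,n\,}$... more transparently $\mu_{n,N}^2 = \frac{1}{n}\cdot\frac{N-n}{N-1} = \frac1n - \frac{n-1}{n(N-1)}$, and since $N\ge nK_{\mini}\ge 2n$ (as $K_{\mini}\ge 2$) we have the convenient two-sided estimate $\frac{1}{2n}\le \mu_{n,N}^2 \le \frac1n$; the lower bound here is what makes the argument robust. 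Then it only remains to bound $n$ in terms of $v$ and $K_{\mini},K_{\maxi}$ via the pair-count identity, and the whole proof collapses to the single scalar inequality coming from $K_{\maxi}\le\sqrt2(K_{\mini}-1)$. I would present it in that streamlined order: unit norms; inner products are $0$ or $\frac{1}{\sqrt{r_xr_y}}$; the sandwich $\frac{1}{2n}\le\mu_{n,N}^2\le\frac1n$; the sandwich for $r_x$ and for $n$ via pair-counting; and finally the arithmetic check that $\frac{1}{\sqrt{r_xr_y}}\le 2\mu_{n,N}$.
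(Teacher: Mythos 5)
Your proposal is correct and follows essentially the same route as the paper's proof: unit-norm columns, orthogonality of columns from the same point, the single-overlap inner product $\frac{1}{\sqrt{r_xr_y}}$ for distinct points, the pair-counting sandwiches for $r_x$ and $n$, the two-sided estimate $\frac{1}{\sqrt{2n}}\leq\mu_{n,N}\leq\frac{1}{\sqrt{n}}$, and the final reduction to the hypothesis $K_{\maxi}\leq\sqrt{2}(K_{\mini}-1)$. The only (minor) improvement is that you justify $N\geq 2n$ directly from $N\geq nK_{\mini}$ and $K_{\mini}\geq 2$, where the paper merely asserts that $2n-1\leq N$ "fails only in degenerate situations."
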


\begin{proof}
Recall that $\Phi$ is $n\times N$ where $n = |\B|$ and $N$ is the sum of the block sizes
(equivalently the sum of the replication numbers of points), and that $\mu_{n,N}$ is the
Welch bound for $\Phi$, see Equation \eqref{WelchBoundEquation}. Also recall that
$\mu(c_{i}, c_{j}) = \frac{|\langle c_{i}, c_{j}\rangle|}{|c_{i}||c_{j}|}$ and that
$\MIP(\Phi) = \max \mu(c_{i}, c_{j})$, where the maximum is taken over all pairs of
distinct columns of $\Phi$.

It suffices to show that $\mu(c_{i}, c_{j}) \leq 2 \mu_{n,N}$ for any pair $c_{i}$ and $c_{j}$
of columns of $\Phi$. Instead of verifying this inequality directly, we show that
$\frac{1}{2}\frac{K_{\maxi}}{v-1} \leq \mu_{n,N} \leq \MIP(c_{i}, c_{j}) \leq \frac{K_{\maxi}}{v-1}$.
This suffices to show $1$-equiangularity. (Later, in Proposition \ref{mainthm1}, we will replace
$\frac{K_{\maxi}}{v-1}$ with another function of the block sizes of $(V,\B)$ and follow a similar
argument to that given here.)

\begin{enumerate}
        \item For each $x \in V$ we have $\frac{v-1}{K_{\maxi}-1} \leq r_{x} \leq \frac{v-1}{K_{\mini}-1}$.
        By counting pairs of incident points, we can bound the number $n$ of blocks in $\B$, as follows:
        \[  \frac{v(v-1)}{K_{\maxi}(K_{\maxi}-1)} \leq n \leq \frac{v(v-1)}{K_{\mini}(K_{\mini}-1)}. \]

        \item We produce upper and lower bounds on $\mu_{n, N}$ in terms of $K_{\maxi}$ and $v$.
        First, using the given \textbf{lower} bound for $n$,
        \[\mu_{n,N} = \sqrt{\frac{N-n}{(N-1)n}}< \sqrt{\frac{1}{n}} < \sqrt{\frac{K_{\maxi}(K_{\maxi}-1)}{v(v-1)}}< \frac{K_{\maxi}}{v-1}. \]
        (We use $2 < K_{\maxi} < v$ to establish the last inequality.)

        Under the hypothesis (which fails only in degenerate situations) that $2n-1 \leq N$, we have that $\frac{1}{\sqrt{2n}} \leq \mu_{n,N}$.
        Using the given \textbf{upper} bound on $n$, the trivial fact that $v > K_{\maxi}$ and the hypothesis that $\frac{K_{\maxi}}{\sqrt{2}} \leq K_{\mini}-1$, we obtain
        \[ \mu_{n,N} \geq \sqrt{\frac{K_{\mini}(K_{\mini}-1)}{2v(v-1)}} \geq  \frac{1}{\sqrt{2}}\frac{K_{\mini}-1}{v-1} \geq \frac{1}{2} \frac{K_{\maxi}}{v-1}. \]

        \item We have established bounds on $\mu_{n,N}$. To complete the argument it suffices to show that for any
        choice of $i$ and $j$, $\mu(c_{i},c_{j})$ is bounded above by $\frac{K_{\maxi}}{v-1}$. Clearly, any two
        columns of $\Phi$ labelled by the same point $x\in V$ are orthogonal, and hence have inner product $0$.
        (Note that this implies that $\Phi$ cannot be $\epsilon$-equiangular for $\epsilon < 1$.)
        So it suffices to consider columns labelled by distinct points.

        \item If $c_{i}$ and $c_{j}$ are columns labelled by distinct points, then there exists a unique
        row of $\Phi$ in which $c_{i}$ and $c_{j}$ are both non-zero. Every entry $\phi_{xy}$ in the
        matrix $\Phi$ satisfies $\sqrt{\frac{K_{\mini}-1}{v-1}} \leq |\phi_{xy}| \leq \sqrt{\frac{K_{\maxi}-1}{v-1}}$,
        so we have $\frac{K_{\mini}-1}{v-1} \leq |\langle\phi_{ik}, \phi_{jk}\rangle| \leq \frac{K_{\maxi}-1}{v-1}$.
        In particular, $ \mu( c_{i}, c_{j}) \leq \MIP(\Phi) \leq \frac{K_{\maxi}}{v-1}$.
        (The lower bound is irrelevant due to the presence of orthogonal vectors.)
        This completes the proof. \qedhere
\end{enumerate}
\end{proof}

In the statement of Proposition \ref{mainthm0} the constants $\sqrt{2}$ and $1$
were chosen to obtain a neat formulation of the theorem. In fact, for the purposes
of showing $\left(\ell_{1}, O(\sqrt{n})\right)$-recoverability, it suffices to show
that $\max \mu(c_{i}, c_{j}) \leq \alpha\mu_{n,N}$ for some constant $\alpha$.
The structure of Proposition \ref{mainthm0} lends itself to easy adaption to other constants.

We demonstrate that designs not meeting the conditions of Proposition \ref{mainthm0}
can be shown to produce $\epsilon$-equiangular frames for some easily computable value
of $\epsilon$. A $\PBD(v,\{3,5\},1)$ with a single block of size $5$ exists for all
$v \equiv 5 \mod 6$ (see Theorem 6.8, \cite{ColbournRosa}). Applying Construction
\ref{con0} it is easily seen that all non-zero inner products of columns are in the
range $\left[ \frac{2}{v-1}, \frac{2}{v-3}\right]$, while the Welch bound is very
closely approximated by $\frac{2}{\sqrt{(v-4)(v+3)}}$. Thus the matrix obtained from
a $\PBD(v,\{3,5\},1)$ via Construction \ref{con0} is $1$-equiangular for all $v\geq 5$.

We summarise the main results of this section as a theorem.

\begin{theorem}\label{PBDCS}
Let $K$ be a set of integers with $2 \leq K_{\mini}$ and $K_{\maxi} \leq \sqrt{2}(K_{\mini}-1)$.
If there exists a $\PBD(v,K,1)$ with $n$ blocks in which the sum of the block sizes is $N$,
then there exists an $n \times N$ compressed sensing matrix with the $(\ell_{1}, t)$-recovery property
for all $t \leq \frac{\sqrt{n}}{4}$.
\end{theorem}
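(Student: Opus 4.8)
The plan is to assemble Theorem~\ref{PBDCS} directly from the machinery already developed in this section: Construction~\ref{con0} turns a pairwise balanced design into a frame, Proposition~\ref{mainthm0} certifies that this frame is $1$-equiangular under the stated hypotheses on $K$, and Proposition~\ref{epscomp} converts $\epsilon$-equiangularity into an $(\ell_{1},t)$-recoverability guarantee. So essentially no new argument is needed; the work is in checking that the hypotheses of each ingredient are met and that the parameters line up.

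First I would take the given $\PBD(v,K,1)$, call it $(V,\B)$, and feed it into Construction~\ref{con0}. This requires a (possibly complex) Hadamard matrix of order $r_{x}$ for each point $x \in V$; as noted after Construction~\ref{con0}, the character table of a cyclic group of order $r_{x}$ always serves, so the construction is unconditional. The output is a frame $\Phi$ with number of rows equal to $|\B|$ and number of columns equal to $\sum_{x\in V} r_{x} = \sum_{B \in \B}|B|$. Since the theorem hypothesis specifies that the design has $n$ blocks and block sizes summing to $N$, the frame $\Phi$ has exactly the required $n \times N$ dimensions.

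Next, the hypotheses $2 \leq K_{\mini}$ and $K_{\maxi} \leq \sqrt{2}(K_{\mini}-1)$ are precisely those of Proposition~\ref{mainthm0}, so that proposition applies and $\Phi$ is $1$-equiangular. Finally I would invoke Proposition~\ref{epscomp} with $\epsilon = 1$: it gives $\Phi$ the $(\ell_{1},t)$-recovery property for all $t \leq \frac{\sqrt{n}}{2(1+\epsilon)} = \frac{\sqrt{n}}{4}$, which is exactly the claim. A matrix with $(\ell_{1},t)$-recoverability is what we mean by a compressed sensing matrix, so this finishes the argument.

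There is no real obstacle remaining once Proposition~\ref{mainthm0} is in hand --- the theorem is a packaging of earlier results. The only point that deserves a word of care is the non-degeneracy condition $2n - 1 \leq N$ used inside the proof of Proposition~\ref{mainthm0} to obtain the lower bound $\mu_{n,N} \geq 1/\sqrt{2n}$; in the write-up I would either note that this holds in all non-degenerate ranges of the parameters (and is harmless given that our target is an asymptotic statement), or absorb it into a slightly larger multiplicative constant, as remarked after Proposition~\ref{mainthm0}. Beyond that, it is worth emphasising that the bound $t \leq \sqrt{n}/4$ sits within a constant factor of the MIP ceiling $\tfrac{1}{\sqrt{2}}\sqrt{n}$ identified in the Remark following Proposition~\ref{epscomp}, so nothing is lost except constants.
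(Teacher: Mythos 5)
Your proposal is correct and follows exactly the paper's own proof: apply Construction~\ref{con0} to obtain an $n\times N$ frame, invoke Proposition~\ref{mainthm0} for $1$-equiangularity, and conclude via Proposition~\ref{epscomp} with $\epsilon=1$. Your additional remark about the non-degeneracy condition $2n-1\leq N$ inherited from Proposition~\ref{mainthm0} is a reasonable point of care, but the argument is the same as in the paper.
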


\begin{proof}
Construction \ref{con0} gives a frame $\Phi$ with $n$ rows and $N$ columns,
Proposition \ref{mainthm0} establishes $1$-equiangularity and
Proposition \ref{epscomp} guarantees $(\ell_{1}, t)$-recovery for all $t \leq \frac{\sqrt{n}}{4}$.
\end{proof}

Theorem \ref{PBDCS} demonstrates that pairwise balanced designs offer a
rich supply of compressed sensing matrices with $(\ell_{1},t)$-recovery
properties close to the square-root bottleneck.

\section{Asymptotic existence of compressed sensing matrices}\label{MainResult}

In this section we use results on the existence of PBDs, which we derive from a
result of Caro and Yuster on asymptotic existence of certain graph decompositions,
to construct compressed sensing matrices. We begin by producing some results,
which we believe to be new, on the existence of $\PBD$s in which the number of blocks of each
size is specified. This builds on an existing literature \cite{WilsonIII,ColbournRodl}.

A {\em decomposition} of a graph $G$ is a set $\mathcal {D}=\{H_1,H_1,\ldots,H_n\}$
of subgraphs of $G$ such that $\bigcup_{i=1}^nE(H_i)=E(G)$ and $E(H_i)\cap E(H_j)=\emptyset$
for $1\leq i<j\leq n$. If $\mathcal{F}$ is a family of graphs and
$\mathcal{D}=\{H_1,H_2,\ldots,H_n\}$ is a decomposition of $G$ such that each $H_i$ is
isomorphic to some graph in $\mathcal {F}$, then $\mathcal{D}$ is called
an $\mathcal{F}$-decomposition. It is clear that a $\PBD(v,K,1)$ is equivalent to an
$\mathcal{F}$-decomposition of $K_v$ where $\mathcal{F}=\{K_k:k\in K\}$.
Here $K_v$ denotes the complete graph on $v$ vertices.
It has an edge joining each pair of distinct vertices.

We shall be using a result of Caro and Yuster \cite{CaroYuster} on $\mathcal{F}$-decompositions.
Their result uses a theorem of Gustavsson \cite{Gustavsson} which has not been published in a
refereed journal, but two independent proofs have recently been published on \texttt{arxiv.org}
by Barber, K\"{u}hn, Lo and Osthus \cite{BarberKuhnLoOsthus}, and also by Keevash \cite{Keevash}.
If $H$ is a graph, then $\gcd(H)$ is defined by $\gcd(H)=\gcd(\{\deg(x):x\in V(H)\})$ where
$\deg(x)$ denotes the degree (in $H$) of the vertex $x$. Let $\mathcal{F}=\{H_1,H_2,\ldots,H_s\}$.
A graph $G$ is said to be {\em $\mathcal{F}$-list-decomposable} if for every list
$\alpha_1,\alpha_2,\ldots,\alpha_s$ of integers satisfying $\sum_{i=1}^s\alpha_i|E(H_i)|=|E(G)|$,
there exists an $\mathcal{F}$-decomposition of $G$ in which the number of copies of $H_i$ is $\alpha_i$ for
$i=1,2,\ldots,s$.

\begin{theorem}[\cite{CaroYuster}, Theorem 1.1]\label{CaroYuster}
If $\mathcal{F}$ is any finite family of graphs such that $\gcd(H)=d$ for each $H\in\mathcal{F}$,
then there exists a constant $C_{\mathcal{F}}$, depending only on $\mathcal{F}$, such that
$K_{n}$ is $\mathcal{F}$-list-decomposable for all $n$ satisfying $n> C_{\mathcal{F}}$ and $d \mid n-1$.
\end{theorem}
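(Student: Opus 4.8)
The plan is to deduce the list-decomposition statement from the single-graph decomposition theorem of Gustavsson \cite{Gustavsson} (equivalently, from the recent work of Keevash \cite{Keevash} and of Barber, K\"uhn, Lo and Osthus \cite{BarberKuhnLoOsthus}): for a fixed graph $H$ with $\gcd(H)=d$ there is a constant $C_H$ such that $K_m$ has an $H$-decomposition whenever $m>C_H$, $d\mid m-1$, and $|E(H)|$ divides $\binom{m}{2}$. Two preliminary observations. The hypothesis $d\mid n-1$ is forced: at each vertex of $K_n$ the degree $n-1$ is a sum of degrees in copies of the $H_i$ meeting that vertex, and every such degree is a multiple of $d$. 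And if $\gcd_i|E(H_i)|$ fails to divide $\binom{n}{2}$, then no admissible list exists and the statement is vacuous, so we may assume admissible lists exist.

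\emph{Stage 1: a coarse decomposition.} Fix, as a function of $\mathcal F$ only, an integer $m_0$ with $d\mid m_0-1$, with $|E(H_i)|\mid\binom{m_0}{2}$ for every $i$, and with $m_0$ larger than every constant $C_{H_i}$ from Gustavsson's theorem; such $m_0$ exists since one may take $\binom{m_0}{2}$ to be an arbitrarily large common multiple of the $|E(H_i)|$. Then, for each $i$, $K_{m_0}$ has an $H_i$-decomposition into $\rho_i:=\binom{m_0}{2}/|E(H_i)|$ copies. By Wilson's theorem \cite{WilsonIII} (a special case of Gustavsson's), for all large $n$ in a suitable residue class we may decompose $K_n$ into $B:=\binom{n}{2}/\binom{m_0}{2}$ ``host'' copies of $K_{m_0}$; filling host number $t$ with the internal $H_{i(t)}$-decomposition produces $\mathcal F$-decompositions of $K_n$ whose count vector is $\sum_i k_i\rho_i\mathbf e_i$, where $k_i$ is the number of hosts assigned to $H_i$ and $\sum_i k_i=B$. (Handling the residue class --- and, for other classes, absorbing a bounded number of leftover edges or varying $m_0$ --- is routine bookkeeping that I would carry out but not dwell on.)

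\emph{Stage 2: elementary trades.} The count vectors of Stage 1 are far too restricted, so I would enlarge the reachable set by re-decomposing the interiors of hosts. Inside any host, and for any admissible size $m\le m_0$ and pair $i\neq j$, one may place a copy of $K_m$ decomposed either into $\binom{m}{2}/|E(H_i)|$ copies of $H_i$ or into $\binom{m}{2}/|E(H_j)|$ copies of $H_j$; switching between these changes the global count vector by the trade vector $t^{(m)}_{ij}=\frac{\binom{m}{2}}{|E(H_i)|}\mathbf e_i-\frac{\binom{m}{2}}{|E(H_j)|}\mathbf e_j$. Let $L=\{\beta\in\Z^s:\sum_i\beta_i|E(H_i)|=0\}$, a lattice of rank $s-1$. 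The trade vectors lie in $L$; the $s-1$ of them coming from pairs $(1,j)$ are linearly independent; and as $m$ ranges over its several admissible values --- comparing $m$ with $m+d$ shows the relevant gcd does not grow with $n$ --- the $t^{(m)}_{ij}$ generate a sublattice $L'\subseteq L$ of index bounded purely in terms of $\mathcal F$.

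\emph{Stage 3: hitting the prescribed list, which is the crux.} The admissible count vectors for $K_n$ form a single coset of $L$ intersected with $\Z^s_{\ge0}$. For $n$ large, Stage 1 --- now using host complete graphs of several admissible orders, not just $m_0$ --- together with the trades of Stage 2 should realize every admissible count vector: the idea is that deep inside the positive orthant the combined moves generate all of $L$, while a Frobenius (Chicken McNugget) estimate ensures the prescribed target $(\alpha_i)$ is reached once $\binom{n}{2}$ is large enough (relative to $[L:L']$ and the $|E(H_i)|$), with the residual displacement cleared by boundedly many trades performed in distinct hosts. The hard part --- and the reason a single threshold $C_{\mathcal F}$ suffices --- is exactly this last stage: one must keep every divisibility condition satisfied simultaneously (so that Gustavsson's theorem applies to each host and each trade gadget) while retaining enough disjoint hosts and enough slack in the inequalities $\alpha_i\ge0$ to carry out all the needed exchanges at once, and this must be done uniformly over all admissible lists. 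Everything else reduces to Gustavsson's theorem and elementary lattice arithmetic.
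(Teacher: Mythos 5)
First, a point of reference: the paper does not prove this statement. Theorem \ref{CaroYuster} is imported verbatim from \cite{CaroYuster} (whose proof in turn rests on Gustavsson's theorem \cite{Gustavsson}, with independent proofs of the latter now available in \cite{BarberKuhnLoOsthus,Keevash}), so there is no in-paper argument to compare yours against. What you have written is therefore an independent attempt at the cited result, and it must be judged on its own.

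Your outline is in the right spirit --- reduce to the single-graph decomposition theorem, then move between count vectors by local trades --- but it is not a proof, and you say so yourself: Stage 3 is labelled ``the crux'' and ``the hard part'' and is described only as what ``should'' happen. Two of the gaps are substantive rather than bookkeeping. First, in Stage 1, Wilson's theorem \cite{WilsonIII} produces a $K_{m_0}$-decomposition of $K_n$ only for $n$ in certain residue classes modulo roughly $m_0(m_0-1)$, which is far more restrictive than the hypothesis $d\mid n-1$; covering the remaining classes is precisely where one needs decompositions of dense \emph{non-complete} graphs (the full strength of Gustavsson/Keevash, not Wilson), and dismissing this as ``routine bookkeeping'' or ``absorbing a bounded number of leftover edges'' hides the main difficulty. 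Second, in Stages 2--3, your trade vectors generate only a sublattice $L'$ of $L$ of index bounded in terms of $\mathcal F$, and you never show how to reach the particular coset of $L'$ containing the prescribed target $(\alpha_1,\ldots,\alpha_s)$, nor that the required trades can be performed in pairwise disjoint hosts while keeping every intermediate count non-negative; this uniformity over all admissible lists is exactly the content of the theorem. As it stands the write-up is a plausible programme rather than a proof, and for the purposes of this paper the honest course is the one the authors take: cite \cite{CaroYuster}.
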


We require $\mathcal{F}$-decompositions where $\mathcal{F}$ consists of a number of complete graphs.
Theorem \ref{CaroYuster} cannot be applied directly in this case because $\gcd(K_k)\neq \gcd(K_l)$ for
$k\neq l$. Lemma \ref{blocksmeta} provides a way around this issue, though first we require some more notation.

\begin{definition}
Let $\mathcal D$ be an $\{F_1,F_2,\ldots,F_s\}$-decomposition of $G$ and let
$(F_1,F_2,\ldots,F_s)$ be a given ordering of $F_1,F_2,\ldots,F_s$. The {\em type} of
$\mathcal{D}$ is the vector $(\alpha_1,\alpha_2,\ldots,\alpha_s)$ where $\alpha_i$ is
the number of copies of $F_i$ in $\mathcal D$ for $i=1,2,\ldots,s$. We say that a type $(\alpha_1,\alpha_2,\ldots,\alpha_s)$ is {\em $(G,(F_1,F_2,\ldots,F_s))$-feasible} if $\sum_{i=1}^s\alpha_i|E(F_i)|=|E(G)|$. A $\{K_{k_1},K_{k_2},\ldots,K_{k_s}\}$-decomposition
of $K_v$ is a $\PBD(v,K,1)$ with $K=\{k_1,k_2,\ldots,k_s\}$, and in the context of $\PBD$s we shall
write $(v,(k_1,k_2,\ldots,k_s))$-feasible rather than $(K_v,(K_{k_1},K_{k_2},\ldots,K_{k_s}))$-feasible.
When $G$ and $(F_{1}, F_{2}, \ldots, F_{s})$ are clear from context, we may just write {\it feasible}
rather than $(G,(F_1,F_2,\ldots,F_s))$-feasible.
 \end{definition}

\begin{lemma}\label{blocksmeta}
Let $K=\{k_1,k_2,\ldots,k_s\}$ and let $M$ be an $s \times t$ matrix with
non-negative integer entries with rows indexed by $k_1-1,k_2-1,\ldots,k_s-1$.
Further, suppose that for each column $c$ of $M$, the $\gcd$ of the row indices
of the non-zero entries in $c$ is $1$. There exists a constant $C$ such that
if $v>C$ and $(\alpha_1,\alpha_2,\ldots,\alpha_s)$ is $(v,(k_1,k_2,\ldots,k_s))$-feasible,
then there exists a $(v,K,1)$-PBD of type $(\alpha_1,\alpha_2,\ldots,\alpha_s)$ whenever
\[MX=(\alpha_1,\alpha_2,\ldots,\alpha_s)^{\top}\]
has a solution $X$ in non-negative integers.
\end{lemma}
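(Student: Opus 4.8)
The plan is to repackage the columns of $M$ as graphs and then invoke Theorem~\ref{CaroYuster}. To each column $c$ of $M$ associate the graph $H_c$ that is the vertex-disjoint union of $M_{i,c}$ copies of $K_{k_i}$ over $i=1,\ldots,s$, and set $\mathcal{F}=\{H_1,\ldots,H_t\}$. The point of this definition is that $K_{k_i}$ is $(k_i-1)$-regular, so the set of vertex-degrees occurring in $H_c$ is exactly $\{k_i-1:M_{i,c}\neq 0\}$ (in particular every column has a nonzero entry, else its gcd would not be $1$). Hence $\gcd(H_c)$ equals the gcd of the row indices of the nonzero entries of column $c$, which is $1$ by hypothesis. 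Thus every member of $\mathcal{F}$ has gcd $d=1$, and Theorem~\ref{CaroYuster} applies, furnishing a constant $C:=C_{\mathcal{F}}$ depending only on $\mathcal{F}$ — equivalently, only on $K$ and $M$ — such that $K_v$ is $\mathcal{F}$-list-decomposable for all $v>C$; the divisibility requirement $d\mid v-1$ is automatic since $d=1$.

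Next I would fix such a $v$, fix a feasible type $(\alpha_1,\ldots,\alpha_s)$ admitting a non-negative integer solution $X=(\beta_1,\ldots,\beta_t)^{\top}$ of $MX=(\alpha_1,\ldots,\alpha_s)^{\top}$, and verify that $(\beta_1,\ldots,\beta_t)$ is an admissible list for $\mathcal{F}$-list-decomposability of $K_v$, i.e. that $\sum_{c}\beta_c|E(H_c)|=|E(K_v)|$. Using $|E(H_c)|=\sum_{i}M_{i,c}\binom{k_i}{2}$ and interchanging the two sums, the left-hand side becomes $\sum_{i}\bigl(\sum_{c}M_{i,c}\beta_c\bigr)\binom{k_i}{2}=\sum_{i}\alpha_i\binom{k_i}{2}$, and this equals $\binom{v}{2}=|E(K_v)|$ precisely because $(\alpha_1,\ldots,\alpha_s)$ is $(v,(k_1,\ldots,k_s))$-feasible. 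Theorem~\ref{CaroYuster} then produces an $\mathcal{F}$-decomposition $\mathcal{D}$ of $K_v$ using exactly $\beta_c$ copies of $H_c$ for each $c$.

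Finally I would unpack each copy of $H_c$ occurring in $\mathcal{D}$ into its complete-graph components. Since each $H_c$ contributes $M_{i,c}$ copies of $K_{k_i}$, this yields a decomposition of $K_v$ into complete graphs $K_{k_1},\ldots,K_{k_s}$ — that is, a $\PBD(v,K,1)$ — in which the number of blocks of size $k_i$ is $\sum_{c}M_{i,c}\beta_c=\alpha_i$. Hence this is a $(v,K,1)$-PBD of type $(\alpha_1,\ldots,\alpha_s)$, as required. The argument is essentially forced once the setup is right, so I do not anticipate a genuine obstacle; the only points demanding care are (i) choosing the $H_c$ so that all members of $\mathcal{F}$ share the single common gcd value $1$ that Theorem~\ref{CaroYuster} requires, and (ii) the bookkeeping that matches admissible lists for $\mathcal{F}$ with non-negative integer solutions of $MX=(\alpha_1,\ldots,\alpha_s)^{\top}$ and aligns the edge-count condition with $(v,(k_1,\ldots,k_s))$-feasibility.
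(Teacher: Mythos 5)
Your proposal is correct and follows essentially the same route as the paper: both define, for each column of $M$, the vertex-disjoint union of the prescribed complete graphs, observe that the column gcd condition forces each such graph to have gcd $1$, invoke Theorem~\ref{CaroYuster} to get list-decomposability of $K_v$ for $v>C$, and then unpack each copy into its complete-graph components to recover the $\PBD$ of the desired type. Your explicit verification that the non-negative solution $X$ yields an admissible list (the edge-count interchange of sums) is a detail the paper leaves implicit, but the argument is the same.
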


\begin{proof}
For $j \in \{1, 2, \ldots, t\}$ define the graph $F_{j} = \Sigma_{i=1}^{s} m_{ij} K_{k_i}$.
Here, $m_{ij}$ is the entry in row $i$ and column $j$ of the given matrix $M$, and $m_{1j}K_{k_1}+m_{2j}K_{k_2}+\ldots+m_{sj}K_{k_s}$ is the union of vertex disjoint complete
graphs, where the number of copies of $K_{k_i}$ is $m_{ij}$ for $i=1,2,\ldots,s$.
The hypothesis concerning the columns of $M$ ensures that $\gcd(F_{j}) = 1$ for all $j$.
Thus, by Theorem \ref{CaroYuster}, there exists a constant $C$ such that for all $v>C$,
$K_v$ is $\{F_1,F_2,\ldots,F_t\}$-list-decomposable.

By hypothesis, $MX=(\alpha_1,\alpha_2,\ldots,\alpha_s)^{\top}$ has a solution $(x_1,x_2,\ldots,x_t)^\top$.
Since the type $(\alpha_1,\alpha_2,\ldots,\alpha_s)$ is $(v,(k_1,k_2,\ldots,k_s))$-feasible,
it follows that the type $(x_1,x_2,\ldots,x_t)$ is $(K_v,(F_1,F_2,\ldots,F_s))$-feasible.
Hence there exists an $\{F_1,F_2,\ldots,F_t\}$-decomposition of type $(x_1,x_2,\ldots,x_t)$
(because $K_v$ is $\{F_1,F_2,\ldots,F_t\}$-list-decomposable). For $j=1,2,\ldots,t$, $F_j$ can be
decomposed into $m_{1j}$ copies of $K_{k_1}$, $m_{2j}$ copies of $K_{k_2}$, and so on.
The resulting decomposition of $K_v$ corresponds to
$\PBD(v,K,1)$ of type $(\alpha_1,\alpha_2,\ldots,\alpha_s)$.
\end{proof}

In the case that $M$ is invertible, a $\PBD$ of type $(\alpha_1,\alpha_2,\ldots,\alpha_s)$
exists whenever $M^{-1}(\alpha_1,\alpha_2,\ldots,\alpha_s)^\top$ consists of non-negative
integers. If in addition $M$ is unimodular, we need only check non-negativity.
Proposition \ref{designexistence} illustrates the utility of Lemma \ref{blocksmeta}.

\begin{proposition}\label{designexistence}
If $k>3$ is an integer, then there exists a constant $C_{k}$ such that for every $v > C_{k}$,
there exists a $\PBD(v,\{k-1,k,k+1\},1)$ of type $(\alpha_{k-1}, \alpha_{k}, \alpha_{k+1})$
for every integer solution $(\alpha_{k-1}, \alpha_{k}, \alpha_{k+1})$ of the following linear program.
\begin{eqnarray}
\alpha_{k} & \geq & \alpha_{k-1} \label{ex1}\\
\alpha_{k} & \geq & \alpha_{k+1} \label{ex2}\\
\alpha_{k+1} + \alpha_{k-1} & \geq & \alpha_{k} \label{ex3}\\
\alpha_{k-1}\binom{k-1}{2} + \alpha_{k}\binom{k}{2} + \alpha_{k+1}\binom{k+1}{2} & = & \binom{v}{2} \label{ex4}
\end{eqnarray}
\end{proposition}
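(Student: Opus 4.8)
The plan is to apply Lemma~\ref{blocksmeta} with a carefully chosen matrix $M$. Set $K=\{k-1,k,k+1\}$, so the rows of $M$ are indexed by $k-2,k-1,k$, and look for columns (building-block graphs $F_j$) each of whose set of nonzero-row-indices has $\gcd$ equal to $1$. Since $\gcd(k-1,k)=1$ always, and $\gcd(k-2,k)\in\{1,2\}$ while $\gcd(k-2,k-1)=1$, it is easy to produce such columns. The natural choice is to take the three ``edge-balanced'' combinations that correspond exactly to inequalities \eqref{ex1}--\eqref{ex3}: namely $F_1$ built from one $K_{k-1}$ and one $K_k$ appropriately, $F_2$ from one $K_k$ and one $K_{k+1}$, and $F_3$ from one $K_{k-1}$ and one $K_{k+1}$; in each case one can also append enough extra copies of $K_k$ (whose row index $k-1$ is coprime to everything) to make the edge counts balance, though in fact the constraint \eqref{ex4} together with feasibility handles the edge count automatically. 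The point of the first three inequalities is precisely that they are the conditions under which the system $MX=(\alpha_{k-1},\alpha_k,\alpha_{k+1})^\top$ has a solution $X$ in \emph{non-negative} integers.

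Concretely, I would take $M$ to be the $3\times 3$ matrix whose columns encode $F_1,F_2,F_3$ as above, arranged so that
\[
M=\begin{pmatrix} 1 & 0 & 1 \\ a & b & 0 \\ 0 & 1 & 1\end{pmatrix}
\]
for suitable small non-negative integers $a,b$ chosen so that each column has coprime support and so that $M$ is unimodular (determinant $\pm1$). Then $X=M^{-1}(\alpha_{k-1},\alpha_k,\alpha_{k+1})^\top$ is automatically an integer vector, and the inequalities \eqref{ex1}--\eqref{ex3} are exactly the statement that the three entries of $X$ are non-negative. One should double-check that the row indices actually occurring in each column have $\gcd$ $1$: column $1$ uses indices $\{k-2,k-1\}$, column $2$ uses $\{k-1,k\}$, column $3$ uses $\{k-2,k\}$; the first two pairs are coprime for every $k$, and for the third, if $k$ is even one inserts an extra copy of $K_k$ (index $k-1$) into that column to restore coprimality, adjusting $M$ and re-checking unimodularity.

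With such an $M$ in hand, Lemma~\ref{blocksmeta} supplies a constant $C=C_k$ (depending only on $K$, hence only on $k$) such that for all $v>C_k$, any $(v,(k-1,k,k+1))$-feasible type $(\alpha_{k-1},\alpha_k,\alpha_{k+1})$ for which $MX=(\alpha_{k-1},\alpha_k,\alpha_{k+1})^\top$ has a non-negative integer solution is realised by a $\PBD(v,K,1)$. Feasibility is condition \eqref{ex4}, and solvability in non-negative integers is conditions \eqref{ex1}--\eqref{ex3} (given unimodularity of $M$, which makes $X$ integral for free). Hence every integer point of the linear program is realised, which is the claim.

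The main obstacle is the bookkeeping around the parity of $k$: one must exhibit a single matrix $M$ (or a pair, one for each parity) that is simultaneously (i) non-negative-integer, (ii) column-wise coprime-supported, (iii) unimodular, and (iv) such that its non-negativity cone for $M^{-1}v$ is cut out exactly by \eqref{ex1}--\eqref{ex3}. Verifying (iv) amounts to computing $M^{-1}$ explicitly and matching rows against the three inequalities; verifying (iii) is a $3\times 3$ determinant; (ii) requires the small case analysis on $\gcd(k-2,k)$ noted above. None of this is deep, but it is the part where a concrete, correct choice of $M$ has to be pinned down rather than merely asserted to exist.
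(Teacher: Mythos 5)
Your overall strategy is exactly the paper's: apply Lemma~\ref{blocksmeta} with a $3\times 3$ non-negative integer matrix $M$ that is unimodular and has coprime column supports, and check that non-negativity of $X=M^{-1}(\alpha_{k-1},\alpha_k,\alpha_{k+1})^{\top}$ is equivalent to \eqref{ex1}--\eqref{ex3}. However, the concrete template you commit to cannot be completed as written. For
\[
M=\begin{pmatrix} 1 & 0 & 1 \\ a & b & 0 \\ 0 & 1 & 1\end{pmatrix}
\]
the determinant is $a+b$, so unimodularity forces $\{a,b\}=\{0,1\}$; but $a=0$ leaves column $1$ supported only on the row of index $k-2$, and $b=0$ leaves column $2$ supported only on the row of index $k$, and in either case the $\gcd$ of the support exceeds $1$ for $k>3$. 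So no choice of $a,b$ in your template satisfies (ii) and (iii) simultaneously -- the obstruction is unimodularity, not just the parity of $\gcd(k-2,k)$ that you flag. The repair you reserve for even $k$ (inserting a copy of $K_k$ into the third building block, i.e.\ making the $(2,3)$ entry equal to $1$) is in fact what must be done for \emph{all} $k$: with $a=b=1$ and that entry set to $1$ one gets
\[
M=\begin{pmatrix} 1 & 0 & 1 \\ 1 & 1 & 1 \\ 0 & 1 & 1\end{pmatrix},
\]
which is precisely the matrix the paper uses. It has determinant $1$, every column support contains two consecutive indices (so has $\gcd$ $1$ for every $k$), and its inverse sends $(\alpha_{k-1},\alpha_k,\alpha_{k+1})^{\top}$ to $(\alpha_k-\alpha_{k+1},\,\alpha_k-\alpha_{k-1},\,\alpha_{k-1}-\alpha_k+\alpha_{k+1})^{\top}$, whose non-negativity is exactly \eqref{ex2}, \eqref{ex1}, \eqref{ex3}. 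With that single correction your argument coincides with the paper's proof; as written, the ``suitable small $a,b$'' you assert to exist do not.
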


\begin{proof}
Let $K = \{k-1, k, k+1\}$, and let
\[ M = \left( \begin{array}{rrr} 1 & 0 & 1 \\ 1 & 1 & 1 \\ 0 & 1 & 1 \end{array} \right). \]
Note that $M$ satisfies the requirements of Lemma \ref{blocksmeta}, with constant $C_{k}$.

Since $M$ is invertible the system $MX = (\alpha_{k-1}, \alpha_{k}, \alpha_{k+1})^{\top}$ is equivalent to
\[ \left( \begin{array}{rrrr}
0 &1 &-1 \\
-1& 1 &0 \\
1 &-1 &1 \end{array} \right) \left( \begin{array}{r} \alpha_{k-1} \\ \alpha_{k} \\ \alpha_{k+1} \end{array}\right) =
\left(\begin{array}{r} x_{1} \\ x_{2} \\x_{3} \end{array} \right).
\]
Now, $M$ is unimodular, so $\left( x_{1}, x_{2}, x_{3} \right)^{\top}$ is integral
when $\left( \alpha_{k-1},  \alpha_{k}, \alpha_{k+1} \right)^{\top}$ is. Clearly,
$x_{1} = \alpha_{x} - \alpha_{k+1}$ is positive precisely when inequality \eqref{ex2} is satisfied.
Likewise, Inequalities \eqref{ex1} and \eqref{ex3} correspond to the second and third rows of this
linear system. It follows that for any integer solution
$\left(\alpha_{k-1}, \alpha_{k}, \alpha_{k+1} \right)^{\top}$ of the system of equations \eqref{ex1}-\eqref{ex4},
$X^{\top} =M\left(\alpha_{k-1}, \alpha_{k}, \alpha_{k+1} \right)^{\top}$ consists of non-negative integers.
Hence by Lemma \ref{blocksmeta} there exists a
$\PBD(v,K,1)$ of type $\left(\alpha_{k-1}, \alpha_{k}, \alpha_{k+1} \right)$.
\end{proof}

Now we turn to the construction of compressed sensing matrices.
The following lemma follows from an easy manipulation of binomial coefficients,
but will be used repeatedly, so we record it here.

\begin{lemma}\label{binomId}
The number of pairs of edges covered by the union of $\alpha_{k-1}$ vertex disjoint
copies of $K_{k-1}$, $\alpha_{k}$ vertex disjoint copies of $K_{k}$ and $\alpha_{k+1}$
vertex disjoint copies of $K_{k+1}$ is $F(\alpha_{k-1},\alpha_{k},\alpha_{k+1}) =
\alpha_{k-1}\binom{k-1}{2} + \alpha_{k}\binom{k}{2} + \alpha_{k+1} \binom{k+1}{2}$.
This function obeys the identity $F(\alpha_{k-1}+t,\alpha_{k}-2t,\alpha_{k+1}+t) =
F(\alpha_{k-1},\alpha_{k},\alpha_{k+1}) + t$.
\end{lemma}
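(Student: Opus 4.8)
The plan is to prove the two assertions separately; both are elementary. For the counting statement, note that a copy of $K_m$ has exactly $\binom{m}{2}$ edges, and that vertex-disjoint subgraphs are in particular edge-disjoint. Hence the edge set of a union of $\alpha_{k-1}$ vertex-disjoint copies of $K_{k-1}$, $\alpha_{k}$ copies of $K_{k}$ and $\alpha_{k+1}$ copies of $K_{k+1}$ is the disjoint union of the edge sets of the individual cliques, so its cardinality is $\alpha_{k-1}\binom{k-1}{2} + \alpha_{k}\binom{k}{2} + \alpha_{k+1}\binom{k+1}{2} = F(\alpha_{k-1},\alpha_{k},\alpha_{k+1})$. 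Each such edge is a pair of points covered by the corresponding block, which is why this quantity appears on the left-hand side of \eqref{ex4}.

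For the identity, I would substitute and collect the coefficient of $t$:
\[ F(\alpha_{k-1}+t,\alpha_{k}-2t,\alpha_{k+1}+t) = F(\alpha_{k-1},\alpha_{k},\alpha_{k+1}) + t\left(\binom{k-1}{2} - 2\binom{k}{2} + \binom{k+1}{2}\right), \]
so it remains to check that the second difference $\binom{k-1}{2} - 2\binom{k}{2} + \binom{k+1}{2}$ equals $1$. This follows from the two instances $\binom{k+1}{2} - \binom{k}{2} = k$ and $\binom{k}{2} - \binom{k-1}{2} = k-1$ of Pascal's rule, whose difference is $1$; equivalently, $\binom{k}{2} = \frac{1}{2}(k^{2}-k)$ is a quadratic polynomial in $k$ with leading coefficient $\frac{1}{2}$, hence has constant second difference $2\cdot\frac{1}{2} = 1$.

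There is no real obstacle here: the content is a one-line binomial manipulation, as the statement of the lemma itself signals. The only point worth flagging is that the hypothesis of \emph{vertex}-disjointness is used precisely to guarantee that the edge sets do not overlap, so that no inclusion--exclusion correction enters the count; everything else is direct computation. The value of isolating the lemma is that the substitution $(\alpha_{k-1},\alpha_{k},\alpha_{k+1}) \mapsto (\alpha_{k-1}+t,\alpha_{k}-2t,\alpha_{k+1}+t)$ will be used repeatedly to adjust the type of a $\PBD$ while changing the number of covered pairs by a prescribed amount $t$, and this identity is exactly what keeps the counting equation \eqref{ex4} balanced under such an adjustment.
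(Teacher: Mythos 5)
Your proof is correct and is exactly the ``easy manipulation of binomial coefficients'' the paper alludes to; the paper in fact omits the proof entirely, stating only that the lemma follows from such a manipulation. Your verification that the second difference $\binom{k-1}{2} - 2\binom{k}{2} + \binom{k+1}{2} = 1$, together with the observation that vertex-disjointness makes the edge count additive, supplies precisely what the authors left implicit.
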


\begin{proposition}\label{integercase}
If $k > 3$ is an integer, then there exists a constant $C_{k}$
such that for all $n>C_{k}$, there exists an $n \times kn$ compressed sensing
matrix with $(\ell_{1}, t)$-recoverability for all $t \leq \frac{\sqrt{n}}{4}$.
\end{proposition}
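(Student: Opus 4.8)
The plan is to apply Theorem~\ref{PBDCS} with a set $K$ of the form $\{k-1,k,k+1\}$, and to use Proposition~\ref{designexistence} to produce a $\PBD(v,K,1)$ with the \emph{right} number of blocks, namely $n$ blocks whose sizes sum to exactly $kn$. First I would check that $K=\{k-1,k,k+1\}$ satisfies the hypotheses of Theorem~\ref{PBDCS}: we need $K_{\mini}=k-1\geq 2$ (so $k\geq 3$, and indeed $k>3$ is assumed) and $K_{\maxi}=k+1\leq\sqrt 2(K_{\mini}-1)=\sqrt 2(k-2)$; the latter holds for all $k\geq 8$ or so, and one checks the small remaining cases $k\in\{4,5,6,7\}$ by hand (or, as the paragraph after Proposition~\ref{mainthm0} notes, the constants $\sqrt 2$ and $1$ can be relaxed, so a slightly weaker recoverability constant covers those cases uniformly). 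Thus any $\PBD(v,\{k-1,k,k+1\},1)$ with $n$ blocks summing to $N$ yields an $n\times N$ matrix with $(\ell_1,t)$-recoverability for all $t\leq\frac{\sqrt n}{4}$.

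The heart of the argument is arranging that $N=kn$. By Lemma~\ref{binomId}, a design of type $(\alpha_{k-1},\alpha_k,\alpha_{k+1})$ has $n=\alpha_{k-1}+\alpha_k+\alpha_{k+1}$ blocks and $N=(k-1)\alpha_{k-1}+k\alpha_k+(k+1)\alpha_{k+1}$ columns, so $N-kn=\alpha_{k+1}-\alpha_{k-1}$. Hence $N=kn$ is equivalent to $\alpha_{k-1}=\alpha_{k+1}$. So I would fix $n$, set $\alpha_{k-1}=\alpha_{k+1}=:a$ and $\alpha_k=n-2a$, and look for an integer $a\geq 0$ making this a valid type. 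With this substitution inequalities \eqref{ex1} and \eqref{ex2} both become $n-2a\geq a$, i.e. $a\leq n/3$, and \eqref{ex3} becomes $2a\geq n-2a$, i.e. $a\geq n/4$; so any integer $a$ in the window $[n/4,\,n/3]$ works, and such an integer exists once $n$ is large (certainly for $n\geq 12$). The remaining constraint is the feasibility equation \eqref{ex4}: we need a vertex set size $v$ with $\binom{v}{2}=F(a,n-2a,a)$. Using the identity in Lemma~\ref{binomId}, $F(a+t,n-2a-2t,a+t)=F(a,n-2a,a)+t$, so as $t$ ranges over an interval the value $F$ takes every integer in a corresponding interval; choosing $t$ so that $a+t$ stays in the feasibility window $[n/4,n/3]$ (which has length $\Theta(n)$), the quantity $F$ ranges over an interval of integers of length $\Theta(n)$ lying around $kn/2\cdot(\text{stuff})\sim \Theta(kn)$. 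Since consecutive triangular numbers $\binom{v}{2}$ are spaced $v\sim\sqrt{2F}=\Theta(\sqrt{kn})$ apart, and $\Theta(n)\gg\Theta(\sqrt{kn})$ for large $n$, some triangular number $\binom{v}{2}$ lands in that interval. Fix such a $v$ and the corresponding type; note $v\to\infty$ as $n\to\infty$, so for $n$ large enough $v$ exceeds the constant $C_k$ of Proposition~\ref{designexistence}.

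With $v$ and the type $(a+t,\,n-2a-2t,\,a+t)$ in hand, Proposition~\ref{designexistence} gives a $\PBD(v,\{k-1,k,k+1\},1)$ of that type, which by construction has exactly $n$ blocks and exactly $N=kn$ incidences; Theorem~\ref{PBDCS} then converts it into the desired $n\times kn$ matrix with $(\ell_1,t)$-recoverability for $t\leq\frac{\sqrt n}{4}$. The constant $C_k$ in the statement absorbs both the threshold from Proposition~\ref{designexistence} (applied at the relevant $v$) and the lower bound on $n$ needed for the counting argument above to locate a triangular number in the target interval.

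I expect the main obstacle to be precisely this last counting step: verifying that the set of feasible values of $F(\alpha_{k-1},\alpha_k,\alpha_{k+1})$, subject to $\alpha_{k-1}=\alpha_{k+1}$ and $\alpha_{k-1}+\alpha_k+\alpha_{k+1}=n$, forms an interval of integers long enough (length growing linearly in $n$) to be guaranteed to contain a triangular number $\binom{v}{2}$ (gaps growing like $\sqrt{n}$). Everything else — checking the hypotheses of Theorem~\ref{PBDCS}, solving the small linear system for $a$, and invoking Proposition~\ref{designexistence} — is routine. One should also be slightly careful that $v>k+1$ so that the design is non-degenerate and $K_{\maxi}<v$, but this is automatic since $v=\Theta(\sqrt{kn})\to\infty$.
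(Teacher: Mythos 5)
Your proposal is correct and follows essentially the same route as the paper: reduce via Theorem~\ref{PBDCS} to finding a $\PBD(v,\{k-1,k,k+1\},1)$ with $n$ blocks whose sizes sum to $kn$ (equivalently $\alpha_{k-1}=\alpha_{k+1}$), parametrise the feasible types over a window of length $\Theta(n)$, use the identity of Lemma~\ref{binomId} together with the $O(\sqrt{n})$ spacing of triangular numbers to hit a value $\binom{v}{2}$, and then invoke Proposition~\ref{designexistence}. Your explicit check that $K_{\maxi}\leq\sqrt{2}(K_{\mini}-1)$ fails for small $k$ (in fact for all $k\leq 9$, not just $k\leq 7$) is a point the paper's own proof silently skips, so flagging it and noting that the constants can be relaxed there is a welcome addition.
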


\begin{proof}
By Theorem \ref{PBDCS}, it is sufficient to construct a $\PBD(v,K,1)$ with $n$ blocks such that
the sum of the block sizes is $kn$. We show that such designs exist for all sufficiently large $n$.

Let $K = \{k-1,k,k+1\}$ and suppose that $v$ is sufficiently large that Proposition \ref{designexistence} holds.
Then every solution to Equations \eqref{ex1}-\eqref{ex4} corresponds to a $\PBD(v,K,1)$, $(V,\B)$, of type $(\alpha_{k-1},\alpha_{k},\alpha_{k+1})$.

Set $n = \alpha_{k-1} + \alpha_{k} + \alpha_{k+1} = |\B|$. For the moment, we assume that
$n \equiv 0 \mod 12$ to reduce the amount of notation we need to employ. We discuss the other
congruence classes at the end of the argument. We require that the number of columns be $kn$, that is
\[kn = \alpha_{k-1}(k-1) + \alpha_{k}k + \alpha_{k+1}(k+1).  \]
This is clearly equivalent to the requirement that $\alpha_{k-1} = \alpha_{k+1}$. Note that only $k$
is specified in the statement of the proposition: in addition to the value of
$(\alpha_{k-1}, \alpha_{k}, \alpha_{k+1})$, we are free to choose the value of $v$.
We now have the following simplified system of inequalities for Proposition \ref{designexistence}:
\begin{eqnarray}
\alpha_{k-1} \; \leq \; \alpha_{k} &\leq& 2\alpha_{k-1} \label{InequalitiesInt}\\
2\alpha_{k-1} + \alpha_{k} & = & n \label{SumInt}\\
\alpha_{k-1} (k^{2}-k+1) + \alpha_{k} \frac{k^{2}-k}{2} &=& \binom{v}{2} \label{EdgesInt}
\end{eqnarray}
(We have expanded the binomial coefficients and gathered like terms in Equation \eqref{EdgesInt}.)

We note that the simultaneous solutions to Equations \eqref{InequalitiesInt} and \eqref{SumInt} are all of the form
\begin{equation}
\left(\alpha_{k-1}, \alpha_{k}, \alpha_{k-1} \right) = \left( \frac{n}{4} + \tau, \frac{n}{2}-2\tau, \frac{n}{4}+\tau \right) \label{taurep}
\end{equation}
for some $0 \leq \tau \leq \frac{n}{12}$. It suffices to show that there exists a solution to
\eqref{EdgesInt} among the vectors of form \eqref{taurep}. We demonstrate this via the function $F(\alpha_{k-1},\alpha_{k},\alpha_{k+1})$ of Lemma \ref{binomId}.

As $\tau$ ranges over the interval $\left[0, \frac{n}{12}\right]$,
$F(\alpha_{k-1}, \alpha_{k}, \alpha_{k+1})$ ranges over the interval
$\left[ n\binom{k}{2} + \frac{n}{4},  n\binom{k}{2} + \frac{n}{3} \right]$.
Clearly every integer in this interval has a unique preimage of the form given
in Equation \eqref{taurep}. This interval is of length $\frac{n}{12} \sim O(n)$.
On the other hand, the distance between consecutive triangular numbers of order $n$
(that is numbers of the form $\binom{v}{2}$ for positive integer $v$) is $O(\sqrt{n})$.
We conclude that for sufficiently large $n$ (guaranteed already by our application of
Proposition \ref{designexistence}) this interval contains many numbers of the form
$\binom{v}{2}$. Furthermore, each equation $F(\alpha_{k-1}, \alpha_{k}, \alpha_{k+1}) = \binom{v}{2}$
in this interval corresponds to solution $(\alpha_{k-1},\alpha_{k},\alpha_{k+1})$ of
the linear program of Proposition \ref{designexistence}. By construction, the design
corresponding to this solution has $n$ blocks and average block size $k$, establishing
the required result in the case that $n \equiv 0 \mod 12$.

The general case $n \equiv i \mod 12$ requires the introduction of an error term
$\iota \equiv -n \mod 12$ in Equation \eqref{taurep} which complicates the presented formulae
and reduces the range of $\tau$ slightly, but does not change the general argument or the
conclusion of the theorem. This completes the proof for every integer $k$ and every $n > C_{k}$. \qedhere
\end{proof}

Theorem \ref{mainexistence} below is an extension of Proposition \ref{integercase} to all rational numbers.
While Theorem \ref{mainexistence} subsumes Proposition \ref{integercase}, we feel that the proof of
Proposition \ref{integercase} illustrates the key concepts without the technical complications of
the proof of Theorem \ref{mainexistence}.

\begin{theorem}\label{mainexistence}
If $h > 3$ is a rational number, then there exists a constant $C_{h}$ such that for all $n > C_{h}$
there exists an $n \times \lfloor hn\rfloor$ matrix with $(\ell_{1},t)$-recoverability for all
$t \leq \frac{\sqrt{n}}{4}$.
\end{theorem}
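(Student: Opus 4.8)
The plan is to follow Proposition~\ref{integercase}: reduce to the existence of a suitable pairwise balanced design, build the matrix via Construction~\ref{con0}, and read off $(\ell_1,t)$-recoverability from a bound on $\MIP$. If $h$ is an integer then $h\ge4$ and the claim is exactly Proposition~\ref{integercase}, so assume $h\notin\mathbb{Z}$ and write $h=k+\theta$ with $k=\lfloor h\rfloor\ge3$ and $\theta\in(0,1)$ rational. I would take the block set $K=\{k,k+1,k+2\}$; note that for small $k$ this violates the hypothesis $K_{\maxi}\le\sqrt2(K_{\mini}-1)$ of Theorem~\ref{PBDCS}, so $\MIP$ will have to be bounded directly rather than by quoting that theorem. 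Fix an integer $j\ge1$ with $\tfrac1{j+2}<\theta$, and let $M$ be the $3\times3$ matrix with rows indexed by $k-1,k,k+1$ and columns $(j+1,1,0)^\top$, $(0,1,1)^\top$, $(1,1,1)^\top$, encoding the graphs $(j+1)K_k+K_{k+1}$, $K_{k+1}+K_{k+2}$, $K_k+K_{k+1}+K_{k+2}$; their degree-gcds are $\gcd(k-1,k)=1$, $\gcd(k,k+1)=1$, $\gcd(k-1,k,k+1)=1$, so $M$ satisfies the hypotheses of Lemma~\ref{blocksmeta}, and $\det M=1$, so $M$ is unimodular. (The matrix used in Proposition~\ref{designexistence} is essentially the case $j=0$, which gives no admissible designs once $\theta$ is close to $0$ or $h$ is a half-integer.)

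For a prescribed number $n$ of blocks, the triples $(\alpha_k,\alpha_{k+1},\alpha_{k+2})$ with $\alpha_k+\alpha_{k+1}+\alpha_{k+2}=n$ and $k\alpha_k+(k+1)\alpha_{k+1}+(k+2)\alpha_{k+2}=\lfloor hn\rfloor$ are exactly $(n-\delta+p,\ \delta-2p,\ p)$, where $\delta=\lfloor hn\rfloor-kn=\lfloor\theta n\rfloor$ and $p\in\mathbb{Z}$. Substituting into the inequalities $M^{-1}(\alpha_k,\alpha_{k+1},\alpha_{k+2})^\top\ge0$ (that is, $\alpha_{k+1}\ge\alpha_{k+2}$, $(j+1)\alpha_{k+1}\ge\alpha_k+j\alpha_{k+2}$, and $\alpha_k+(j+1)\alpha_{k+2}\ge(j+1)\alpha_{k+1}$), one checks that the admissible $p$ form the interval
\[
\frac{(j+2)\delta-n}{3j+4}\ \le\ p\ \le\ \frac{(j+2)\delta-n}{3(j+1)},
\]
which, because $\theta>\tfrac1{j+2}$, is non-empty and of length $\tfrac{(j+2)\delta-n}{(3j+3)(3j+4)}=\Theta(n)$ for all large $n$. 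By Lemma~\ref{binomId}, incrementing $p$ by $1$ raises $F(\alpha):=\alpha_k\binom k2+\alpha_{k+1}\binom{k+1}2+\alpha_{k+2}\binom{k+2}2$ by exactly $1$, so as $p$ varies $F$ runs through every integer in a range of length $\Theta(n)$; since numbers of the form $\binom v2$ of this size are spaced $O(\sqrt n)$ apart, for $n$ large the range contains some $\binom v2$. For the corresponding $p$ the type $\alpha$ is $(v,(k,k+1,k+2))$-feasible and equals $M$ times the non-negative integer vector $M^{-1}\alpha$, so Lemma~\ref{blocksmeta} (valid once $n$, hence $v$, exceeds the constant it supplies) produces a $\PBD(v,K,1)$ with exactly $n$ blocks whose sizes sum to $\lfloor hn\rfloor$.

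Now apply Construction~\ref{con0} to this design; the resulting matrix $\Phi$ is $n\times\lfloor hn\rfloor$. Columns of $\Phi$ indexed by the same point are orthogonal, while two columns indexed by distinct points $x,x'$ have inner product of modulus $(r_xr_{x'})^{-1/2}$, so, since $\sum_{B\ni x}(|B|-1)=v-1$ forces $r_x\ge\frac{v-1}{K_{\maxi}-1}=\frac{v-1}{k+1}$ for every $x$, we get $\MIP(\Phi)\le\frac{k+1}{v-1}$. On the other hand convexity of $x\mapsto\binom x2$ gives $\binom v2=\sum_{B\in\B}\binom{|B|}2\ge n\binom{\bar k}2$ with $\bar k=\lfloor hn\rfloor/n$, so $v>\sqrt{n\,\bar k(\bar k-1)}$. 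Since $\bar k\to h$ and $h>3$ forces $k+1\le h+1<2\sqrt{h(h-1)}$ (equivalently $3h^2-6h-1>0$), for all sufficiently large $n$ one obtains $\MIP(\Phi)<\frac{2}{\sqrt n-2}$, whence Theorem~\ref{DonohoHu} yields $(\ell_1,t)$-recoverability for every $t\le\frac{\sqrt n}4$; equivalently these estimates give $\MIP(\Phi)\le2\mu_{n,\lfloor hn\rfloor}$, so $\Phi$ is $1$-equiangular and Proposition~\ref{epscomp} applies. Taking $C_h$ large enough to absorb all the "for $n$ large" statements above finishes the argument.

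I expect the main obstacle to be the construction of $M$: recognising that the unimodular matrix of Proposition~\ref{designexistence} provides no slack when $\theta$ is small (or $h$ is a half-integer), introducing the parameter $j$ in its place, and then verifying that the admissible interval for $p$ genuinely has length $\Theta(n)$, so that a triangular number is trapped inside it. The reduction to a PBD and the direct $\MIP$ estimate are then routine, the one point of care being that Theorem~\ref{PBDCS} cannot be invoked here because $\{k,k+1,k+2\}$ fails its $\sqrt2$-condition for small $k$.
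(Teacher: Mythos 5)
Your proof is correct, and while it follows the same broad strategy as the paper's (reduce to the existence of a $\PBD$ with three consecutive block sizes and a prescribed type via Lemma \ref{blocksmeta}, parameterise the feasible types by one integer, and use the identity of Lemma \ref{binomId} together with the $O(\sqrt{n})$ spacing of triangular numbers to hit $\binom{v}{2}$), it differs in execution in two ways that are worth recording. First, the paper sets $k$ to be the \emph{nearest} integer to $h$, writes $h=k+\epsilon$ with $|\epsilon|\leq\tfrac12$, and splits into the cases $0\leq\sigma\leq\tfrac n4$ and $\tfrac n4\leq\sigma\leq\tfrac n2$, using the unimodular matrix of Proposition \ref{designexistence} in the first case and a second ad hoc matrix in the second; you instead take $k=\lfloor h\rfloor$, $K=\{k,k+1,k+2\}$, and a single one-parameter family of unimodular matrices $M=M_j$ with $\tfrac1{j+2}<\theta$, which handles all $\theta\in(0,1)$ uniformly and whose admissible interval $\bigl[\tfrac{(j+2)\delta-n}{3j+4},\tfrac{(j+2)\delta-n}{3(j+1)}\bigr]$ I have checked has length $\Theta(n)$ exactly as you claim. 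Second, and more substantively, the paper concludes by invoking Theorem \ref{PBDCS}, whose hypothesis $K_{\maxi}\leq\sqrt2(K_{\mini}-1)$ fails for $K=\{k-1,k,k+1\}$ unless $k$ is fairly large (roughly $k\geq10$), so the paper's proof as written has a gap for small $h$; your direct estimate $\MIP(\Phi)\leq\frac{k+1}{v-1}$ combined with $v>\sqrt{n\bar k(\bar k-1)}$ and the observation that $h>3$ gives $(h+1)^2<4h(h-1)$ sidesteps this entirely (it is the same kind of argument the paper uses informally for its $\PBD(v,\{3,5\},1)$ example after Proposition \ref{mainthm0}). In short: same skeleton, but your matrix choice is cleaner and your endgame actually repairs a weakness in the published argument.
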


\begin{proof}
The proof here is similar in outline to that of Proposition \ref{integercase}.
Proposition \ref{designexistence} will not suffice in this case, we will need to apply
Lemma \ref{blocksmeta} directly. Our notation here is as in Proposition \ref{integercase}.
Take $k$ to be the integer closest to $h$, so $h = k + \epsilon$ for $|\epsilon| \leq \frac{1}{2}$.
Take $K = \{k-1, k, k+1\}$. If $\epsilon < \frac{1}{n}$, then Proposition \ref{integercase} applies.
We consider the case $\epsilon \in \left[ \frac{1}{n}, \frac{1}{2}\right]$ first. By Theorem \ref{PBDCS},
the existence of a $\PBD$ satisfying the following linear system for all sufficiently large $n$
will establish the theorem.
\begin{eqnarray}
\sum_{i\in K} \alpha_{i} & = & n \label{gen1} \\
\sum_{i\in K} i\alpha_{i} & = & \lfloor (k+\epsilon)n \rfloor = kn + \lfloor\epsilon n\rfloor \label{gen2}
\end{eqnarray}
For convenience, we write $\sigma = \lfloor\epsilon n\rfloor$. Since
$\sum_{i\in K} \alpha_{i}  =  n$, Equation \ref{gen2} is equivalent to
$\alpha_{k+1} - \alpha_{k-1} = \sigma$, where by hypothesis $1 \leq \sigma \leq \frac{n}{2}$.
We have two linear equations in three unknowns, so solutions are
parameterised by a single variable. It is easily verified that one solution is
$(0, n-\sigma, \sigma)$ and that a vector in the nullspace is $(1,-2,1)$.
So clearly every solution is of the form $(\alpha_{k-1}, \alpha_{k}, \alpha_{k+1}) =
(\tau, n-\sigma-2\tau, \sigma+\tau)$. The number of blocks in a design is a
non-negative integer, so we require this of any putative solution. Observe that
there is a linear order (given by the value of $\tau$) on the solutions and that
the extremal elements of this system are $(0, n-\sigma, \sigma)$ and
$(\frac{n-\sigma}{2}, 0, \frac{n+\sigma}{2})$. By hypothesis, $\sigma \leq \frac{n}{2}$,
so we have at least $\frac{n}{4}$ distinct integer solutions.

Now, for the existence of a $\PBD$, we require that
$\sum_{i \in K} \binom{i}{2} \alpha_{i}  =  \binom{v}{2}$ has a solution. As in Proposition
\ref{integercase}, we consider the function $F(\alpha_{k-1}, \alpha_{k}, \alpha_{k+1})$ of
Lemma \ref{binomId}, supported on the set $(\alpha_{k-1}, \alpha_{k}, \alpha_{k+1}) =
(\tau, n-\sigma-2\tau, \sigma+\tau)$ where $0 \leq \tau \leq \frac{n}{4}$. We will show that
for any choice of $\sigma$, there exists a matrix $M$  as in Lemma \ref{blocksmeta} such that
there exists an interval of length at least $\frac{n}{36}$ on which each solution of the equation
$F(\alpha_{k-1}, \alpha_{k},\alpha_{k+1}) = \binom{v}{2}$ corresponds to a $\PBD(v,K,1)$ of type
$(\alpha_{k-1}, \alpha_{k}, \alpha_{k+1})$. Note that, as $\sigma \rightarrow \frac{n}{2}$,
the inequalities of Proposition \ref{designexistence} fail to hold on an interval of length $O(n)$.

First we deal with the case where $0 \leq \sigma \leq \frac{n}{4}$. The three inequalities
$\eqref{ex1}, \eqref{ex2}$ and $\eqref{ex3}$ are equivalent to the requirement
$2\sigma + 3\tau \leq n \leq 2\sigma + 4\tau$. Recalling that $\sigma = \lfloor \epsilon n \rfloor$,
we have $3\tau \leq (1-2\epsilon)n \leq 4\tau$. Solving for $\tau$ we obtain
$\frac{(1-\epsilon)n}{4}\leq \tau \leq \frac{(1-\epsilon)n}{3}$. This is an interval of length
$\frac{(1-\epsilon)n}{12}$, which is of length $O(n)$ for any $0 \leq \epsilon \leq \frac{1}{4}$.

Now we consider $ \frac{n}{4} \leq \tau \leq \frac{n}{2}$. Let
\[ M = \left( \begin{array}{rrr} 1 & 0 & 0 \\ 5 & 1 & 1 \\ 0 & 1 & 2 \end{array} \right). \]

Inverting $M$ as in Proposition \ref{designexistence}, we obtain the inequalities
\[ \alpha_{k-1} \geq 0, \hspace{0.2cm}\textrm{and}\hspace{0.2cm}
10\alpha_{k-1} + 2\alpha_{k+1} \geq 2\alpha_{k} \geq 10\alpha_{k-1} + \alpha_{k+1}. \]
Substituting for $\sigma$ and $\tau$ as given in our parametrisation
$(\alpha_{k-1}, \alpha_{k},\alpha_{k+1}) = (\tau, n-\sigma-2\tau, \sigma+\tau)$ of the solution space,
we find that we require $16\tau + 4\sigma \geq 2n \geq 15\tau + 3\sigma$. Now, recalling that
$\sigma = \lfloor \epsilon n\rfloor$ with $\frac{1}{4} \leq \epsilon \leq \frac{1}{2}$,
we solve for $\tau$:
\[ \frac{n(1-2\epsilon)}{8} \leq \tau \leq \frac{n(2-3\epsilon)}{15}. \]
This is an interval of length $\frac{n(1+6\epsilon)}{120}$, where $\epsilon \geq \frac{1}{4}$.
It follows that this interval is of length at least $\frac{n}{48}\sim O(n)$.

The density of triangular numbers then establishes the existence of many solutions of
$\sum_{i \in K} \binom{i}{2} \alpha_{i}  =  \binom{v}{2}$ in the solution space of Equations
\eqref{gen1} and \eqref{gen2} for any value of $\epsilon \in \left[ 0, \frac{n}{2} \right]$.

So we have shown that for any $\epsilon \in \left[0, \frac{1}{2} \right]$,
for any $k$ and for all sufficiently large $n$, there exists an interval of
length $O(n)$ on which every feasible type $(\alpha_{k-1}, \alpha_{k}, \alpha_{k+1})$
corresponds to a $\PBD(v,K,1)$ of type $(\alpha_{k-1}, \alpha_{k}, \alpha_{k+1})$.
By construction, each such design has $n$ blocks and the sum of the block sizes is
$\lfloor(k+\epsilon)n\rfloor = \lfloor hn\rfloor$. The argument extends to
$-\frac{1}{2} \leq \epsilon \leq 0$ by swapping the roles of $k-1$ and $k+1$ in the preceding argument.
\end{proof}

\section{Generalisations and modifications}

We have introduced methods of some generality for the construction of compressed sensing matrices.
In the interests of clarity and brevity, we have sketched only the basic ideas.
In this section we give a number of extensions.

\subsection{Extending Construction \ref{con0} using MUBs}

Let $\mathcal{M} = \{M_{0}, M_{1}, \ldots, M_{e}\}$ be a set of orthonormal bases of $\C^{n}$
(written as matrices with the basis vectors as columns). We say that $\mathcal{M}$ is a set of
\textit{mutually unbiased bases} (MUBs) if, for any $i \neq j$, all entries of
$M_{i}^{\dagger}M_{j}$ have absolute value $\frac{1}{\sqrt{n}}$. Without loss
of generality, we take $M_{0} = I$, in which case each $M_{i}$ is a complex Hadamard matrix.
We show that a set of MUBs can be used to increase the number of columns in the
matrices given in Construction \ref{con0} without any loss in $(\ell_{1},t)$-recoverability.

Suppose that $(V,\B)$ is a $\PBD$ in which all points have replication number $r$, and let
$\{M_{0} (=I), M_{1}, \ldots, M_{e}\}$ be a set of MUBs of dimension $r$. Denote by
$\Phi_{i}$ the frame constructed from Construction \ref{con0} using $M_{i}$ throughout.
We claim that the frame $\left[ \Phi_{1} | \Phi_{2} | \ldots | \Phi_{e} \right]$ is $1$-equiangular.
To see this, it suffices to consider the inner product of a column from $\Phi_{i}$ with a column from
$\Phi_{j}$. In the case that the columns are labelled by distinct points, the columns share a single
non-zero entry, so the inner product is of absolute magnitude at most $\frac{1}{r}$. In the case that
the columns are labelled by the same point from $V$, we have that the inner product is $\frac{1}{r}$,
by the definition of the MUBs.

Thus we obtain a frame with the same number of rows as in a naive application of Construction \ref{con0},
but with $e$ times as many columns. This construction is particularly effective when $r$ is a prime power,
as in this case a full set of $r+1$ $\MUB$s exists \cite{KlappeneckerMUBS}. If $(V,\B)$ is a $\BIBD(v,k,1)$
(that is, a $\PBD(v,K,1)$ with $K = \{k\}$), then $|\B| = n = \frac{v(v-1)}{k(k-1)}$, and every point has
replication number $\frac{v-1}{k-1}$. A direct application of Construction \ref{con0} yields a
$\frac{v(v-1)}{k(k-1)} \times \frac{v(v-1)}{k-1}$ compressed sensing matrix. Under the assumption that
$r =\frac{v-1}{k-1}$ is a prime power and using a set of MUBs, we obtain a $\frac{v(v-1)}{k(k-1)} \times \frac{v(v-1)^{2}}{(k-1)^{2}}$ matrix. While in the first case we obtain a ratio $1:k$ between rows
and columns, in the second we obtain a ratio $1:\frac{k(v-1)}{k-1} > v$, which is a substantial improvement.

Of course, the restriction that all replication numbers are equal is merely a convenience.
We are free to replace each Hadamard matrix in Construction \ref{con0} with a set of mutually
unbiased Hadamard matrices. Little is known about the existence of MUBs when the dimension is not
of prime power order, so the practical applications of this observation in the general case may be limited.

\subsection{A generalisation of Construction \ref{con0} using packings}

If $V$ is a set of $V$ points and $\B$ is a collection of subsets of $V$, then
$(V, \B)$ is a \textit{packing} if each pair of points occurs together in at most
one block of $\B$. If each block in $\B$ has cardinality in $K$, then we denote such
a packing by $\PBD(v,K,q)$. (If every pair of points is contained in exactly one block
we recover the definition of a $\PBD$.) It is easily verified that Construction \ref{con0}
produces $\epsilon$-equiangular frames when a packing is used in place of a pairwise
balanced design, provided that there are no points with replication numbers that are too small.

\begin{proposition}
If there exists a $\PBD(v,K,1)$ with $2 \leq K_{\mini}$ and $K_{\maxi} \leq \sqrt{2}(K_{\mini}-1)$
in which the smallest replication number is at least $r_{x} \geq \frac{v-1}{\tau(K_{\mini}-1)}$,
then there exists a compressed sensing matrix with $(\ell_{1}, t)$-recoverability for all
$t \leq \frac{\sqrt{n}}{4\tau}$. If the average block size is $k$, then the ratio of rows to columns is $1: k$.
\end{proposition}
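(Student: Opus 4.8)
The plan is to feed the given packing into Construction~\ref{con0} exactly as one would a pairwise balanced design, obtaining an $n\times N$ matrix $\Phi$ with $n=|\B|$ rows and $N=\sum_{x\in V}r_x=\sum_{B\in\B}|B|$ columns, and then to re-run the proof of Proposition~\ref{mainthm0} while tracking precisely where a genuine $\PBD$ is used. The only feature of a $\PBD$ that the inner-product bookkeeping in Proposition~\ref{mainthm0} actually exploits is that every pair of points lies in \emph{at most} one common block, and this still holds for a packing; the extra factor $\tau$ in the conclusion will enter at the single point where a $\PBD$ supplies a lower bound on replication numbers for free and a packing does not.

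First I would check that Construction~\ref{con0} still outputs a frame of the claimed shape. Each of the $r_x$ columns of $\Phi$ attached to a point $x$ is, after deletion of its zero rows, a column of a row-permutation of $\tfrac{1}{\sqrt{r_x}}H_x$; hence it has unit $\ell_2$-norm and is orthogonal to the other columns attached to $x$. The rank count is the argument of Lemma~\ref{normlemma} run one point at a time: if $\sum_B c_B(\mathrm{row}_B)=0$, then restricting to the columns attached to a fixed point $x$ exhibits a vanishing linear combination of the rows of $\tfrac{1}{\sqrt{r_x}}H_x$ whose coefficients are the $c_B$ with $B\ni x$, forcing $c_B=0$ whenever $B\ni x$; since $K_{\mini}\ge 2$ every block is non-empty, so every $c_B=0$ and $\Phi$ has rank $n$. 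If the average block size is $k$ then $N/n=(\sum_B|B|)/n=k$, so $\Phi$ is $n\times kn$.

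Next I would bound $\MIP(\Phi)$. Columns attached to the same point are orthogonal, as just observed. Two columns $c_i,c_j$ attached to distinct points $x\ne y$ have $\langle c_i,c_j\rangle=0$ unless $x,y$ share a (then unique) common block $B$, in which case the inner product is supported on row $B$ and has absolute value $\tfrac{1}{\sqrt{r_xr_y}}\le \tfrac{1}{\min_{z\in V}r_z}$. This is the one place the packing differs from a $\PBD$: a $\PBD$ forces $r_x\ge \tfrac{v-1}{K_{\maxi}-1}$ automatically, whereas here I must invoke the hypothesis $\min_z r_z\ge \tfrac{v-1}{\tau(K_{\mini}-1)}$, giving $\MIP(\Phi)\le \tfrac{\tau(K_{\mini}-1)}{v-1}$. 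The Welch-bound estimate then goes through as in Proposition~\ref{mainthm0}: every block has size $\ge K_{\mini}\ge 2$, so $N=\sum_B|B|\ge 2n$, the non-degeneracy hypothesis $2n-1\le N$ holds automatically, and $\mu_{n,N}\ge \tfrac{1}{\sqrt{2n}}$; counting covered pairs in the packing gives $n\binom{K_{\mini}}{2}\le\binom{v}{2}$, whence (using $v>K_{\mini}$) $\mu_{n,N}\ge\sqrt{\tfrac{K_{\mini}(K_{\mini}-1)}{2v(v-1)}}\ge\tfrac{1}{\sqrt2}\,\tfrac{K_{\mini}-1}{v-1}\ge\tfrac12\,\tfrac{K_{\mini}-1}{v-1}$. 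Combining, $\MIP(\Phi)\le 2\tau\,\mu_{n,N}$, i.e.\ $\Phi$ is $(2\tau-1)$-equiangular, so Proposition~\ref{epscomp} yields $(\ell_1,t)$-recoverability for all $t\le\tfrac{\sqrt n}{2\cdot 2\tau}=\tfrac{\sqrt n}{4\tau}$.

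The step I expect to need the most care is checking that replacing ``exactly one common block'' by ``at most one common block'' breaks neither the frame property nor the pair count; both survive, but the argument genuinely uses $K_{\mini}\ge 2$, which is what keeps all blocks non-empty and also gives $N\ge 2n$. It is also worth remarking that in this packing version the hypothesis $K_{\maxi}\le\sqrt2(K_{\mini}-1)$ is never used: the replication-number hypothesis has taken over its role, and the bound on $K_{\maxi}$ is retained only for uniformity with Proposition~\ref{mainthm0}.
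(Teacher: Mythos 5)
Your proof is correct and follows exactly the route the paper intends: the paper states this proposition without a written proof, remarking only that it is ``easily verified'' by feeding the packing into Construction \ref{con0} and rerunning the argument of Proposition \ref{mainthm0}, and your write-up carries out precisely that adaptation, correctly isolating the replication-number hypothesis as the one place where a packing differs from a genuine $\PBD$. Your closing observation that the hypothesis $K_{\maxi}\leq\sqrt{2}(K_{\mini}-1)$ becomes redundant once the replication bound is assumed is also accurate.
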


We observe that the existence of dense packings is guaranteed by the
R\"odl `nibble' (see Section 4.7 of \cite{AlonSpencer}, for example).
Whilst results depending on Theorem \ref{CaroYuster} are asymptotic in nature, one can apply the
R\"odl nibble to obtain a packing of any complete graph with graphs from
$\mathcal{F}$. As $v\rightarrow\infty$, this packing will tend to an $\mathcal{F}$-decomposition.
Thus we can in principal construct compressed sensing matrices of any size using this
method, though they will approach Welch-optimality only as $n \rightarrow \infty$.

\subsection{An alternative construction for $\epsilon$-equiangular frames}\label{altconst}

\begin{construction}\label{con1}
If $(V,\B)$ is a $\PBD(v,K,1)$, then let $n = |\B|$ and $N = \sum_{x\in V} r_{x}+1$
and define $\Phi$ to be the $n\times N$ frame constructed as follows.

\begin{itemize}
    \item Let $A$ be the transpose of the incidence matrix of $(V,\B)$, defined precisely as in Construction \ref{con0}.
    \item For each $x \in V$ let $H_{x}$ be a (possibly complex) Hadamard matrix of order $r_{x}+1$.
    \item For each $x \in V$, column $x$ of $A$ determines $r_{x}+1$ columns of $\Phi$: each zero in column $x$ is replaced with the $1 \times (r_{x}+1)$
    row vector $(0,0,\ldots,0)$, and each $1$ in column $x$ is replaced with a distinct non-initial row of $\frac{1}{\sqrt{r_{x}+1}}H_{x}$.
\end{itemize}
\end{construction}

Results analogous to those shown for Construction \ref{con0} hold also for Construction \ref{con1}.
The main interest of Construction \ref{con1} is as a source of $\epsilon$-equiangular frames for $\epsilon < 1$.

\begin{proposition}\label{mainthm1}
If $(V,\B)$ is a $\PBD(v,K,1)$, then Construction \ref{con1} produces an $\frac{K_{\maxi}-K_{\mini}}{K_{\mini}-1}$-equiangular frame.
\end{proposition}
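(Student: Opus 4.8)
The plan is to imitate the proof of Proposition~\ref{mainthm0}: I will sandwich every normalised inner product $\mu(c_i,c_j)$ of columns of $\Phi$, together with the Welch bound $\mu_{n,N}$, inside a single interval $[L,U]$ with $U/L=1+\epsilon$, where $\epsilon=\frac{K_{\maxi}-K_{\mini}}{K_{\mini}-1}$.

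First I would compute the inner products exactly. Since every entry of a complex Hadamard matrix has modulus $1$ and its columns are orthogonal, a column $c_i$ of $\Phi$ labelled by a point $x$ equals, on its support (the $r_x$ blocks through $x$), the restriction of $\frac{1}{\sqrt{r_x+1}}$ times a column of $H_x$ to its $r_x$ non-initial coordinates, so $|c_i|^2=\frac{r_x}{r_x+1}$. If $c_i,c_j$ are labelled by the same point $x$, they arise from distinct columns $a,b$ of $H_x$, whose full inner product is $0$, so the inner product of their non-initial parts is $-\overline{H_x[1,a]}H_x[1,b]$, of modulus $1$; dividing by the norms gives $\mu(c_i,c_j)=\frac{1}{r_x}$. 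If $c_i,c_j$ are labelled by distinct points $x,y$, they share exactly one nonzero row, namely the unique block containing both (this uses $\lambda=1$), so $|\langle c_i,c_j\rangle|=\frac{1}{\sqrt{(r_x+1)(r_y+1)}}$ and again $\mu(c_i,c_j)=\frac{1}{\sqrt{r_xr_y}}$. Thus $\mu(c_i,c_j)=\frac{1}{\sqrt{r_xr_y}}$ in every case, and $\MIP(\Phi)=\bigl(\min_x r_x\bigr)^{-1}$.

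Next, exactly as in Proposition~\ref{mainthm0}, counting pairs through a point gives $\frac{v-1}{K_{\maxi}-1}\le r_x\le\frac{v-1}{K_{\mini}-1}$, so every $\mu(c_i,c_j)$ lies in the interval $[L,U]$ with $L=\frac{K_{\mini}-1}{v-1}$ and $U=\frac{K_{\maxi}-1}{v-1}$; in particular $\MIP(\Phi)\le U$, so $\mu_{n,N}\le\MIP(\Phi)\le U$ by the Welch bound. It remains to prove $\mu_{n,N}\ge L$, which is the main obstacle: the crude estimate $\mu_{n,N}^2\ge\frac{1}{2n}$ available in Proposition~\ref{mainthm0} loses a constant factor, but the larger number of columns in Construction~\ref{con1} lets us recover it. Here $N=\sum_{x\in V}(r_x+1)=v+\sum_{B\in\B}|B|\ge v+nK_{\mini}$, and $\mu_{n,N}^2=\frac{N-n}{(N-1)n}$ is increasing in $N$, so together with the pair-count bound $n\le\frac{v(v-1)}{K_{\mini}(K_{\mini}-1)}$ it suffices to check $\frac{v+n(K_{\mini}-1)}{n(v+nK_{\mini}-1)}\ge\frac{(K_{\mini}-1)^2}{(v-1)^2}$. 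Clearing denominators, this becomes $Q(n)\le0$ for an upward-opening quadratic $Q$ in $n$ with $Q(0)=-(v-1)^2v<0$ and, by direct substitution, $Q\!\left(\frac{v(v-1)}{K_{\mini}(K_{\mini}-1)}\right)=0$; since $0<n\le\frac{v(v-1)}{K_{\mini}(K_{\mini}-1)}$ lies between the two roots of $Q$, we get $Q(n)\le0$, as needed. (The equality case is precisely the equiangular tight frame one obtains when the design is a $\BIBD(v,K_{\mini},1)$, for which all $r_x$ are equal.)

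Finally, both $\mu_{n,N}$ and every $\mu(c_i,c_j)$ lie in $[L,U]$ with $U/L=1+\epsilon$, so $\mu(c_i,c_j)\le U=(1+\epsilon)L\le(1+\epsilon)\mu_{n,N}$ and $\mu(c_i,c_j)\ge L=\frac{U}{1+\epsilon}\ge\frac{\mu_{n,N}}{1+\epsilon}\ge(1-\epsilon)\mu_{n,N}$, the last step because $(1-\epsilon)(1+\epsilon)\le1$. Hence $\Phi$ is $\epsilon$-equiangular. Throughout I assume the non-degenerate case $v>K_{\maxi}$ (ensuring all $r_x\ge2$) and $K_{\mini}\ge2$, both implicit in the statement.
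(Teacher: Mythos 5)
Your proof is correct, and its skeleton is the same as the paper's: trap every coherence $\mu(c_i,c_j)$ and the Welch bound $\mu_{n,N}$ in one interval whose endpoints have ratio $\frac{K_{\maxi}-1}{K_{\mini}-1}=1+\epsilon$, the two nontrivial steps being the replication-number bounds $\frac{v-1}{K_{\maxi}-1}\le r_x\le\frac{v-1}{K_{\mini}-1}$ and a lower bound on $\mu_{n,N}$ obtained from $n\le\frac{v(v-1)}{K_{\mini}(K_{\mini}-1)}$ together with a lower bound on $N$. The execution differs in two substantive ways. First, you compute the normalised inner products exactly as $\frac{1}{\sqrt{r_xr_y}}$, correctly accounting for the fact that the columns of Construction \ref{con1} have norm $\sqrt{r_x/(r_x+1)}\neq 1$; the paper works with the raw inner products $\frac{1}{r+1}$ and the correspondingly shifted interval $\left[\frac{K_{\mini}-1}{v+K_{\mini}-2},\frac{K_{\maxi}-1}{v+K_{\mini}-2}\right]$, implicitly treating the columns as unit vectors. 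Second, because your interval has the larger lower endpoint $L=\frac{K_{\mini}-1}{v-1}$, the paper's chain of elementary inequalities (which starts from $N\ge nK_{\mini}$ and only reaches $\frac{K_{\mini}-1}{\sqrt{v(v-1)}}\ge\frac{K_{\mini}-1}{v}$) would not suffice for your target; you compensate by exploiting the extra summand in $N=v+\sum_{B\in\B}|B|\ge v+nK_{\mini}$ and closing the estimate with a quadratic whose positive root is exactly $n=\frac{v(v-1)}{K_{\mini}(K_{\mini}-1)}$ — a pleasing argument that is tight precisely in the $\BIBD(v,K_{\mini},1)$ case, matching the paper's remark that $K=\{k\}$ yields an ETF. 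The net effect is that your version is more careful about normalisation and sharper where it needs to be, at the cost of a slightly less elementary final inequality.
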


\begin{proof}
Using the techniques developed in Proposition \ref{mainthm0}, it can be shown that
\[ \frac{K_{\mini} -1}{v+K_{\mini}-2} \leq \frac{1}{r_{\maxi} +1} \leq |\langle c_{i}, c_{j} \rangle| \leq \frac{k_{\maxi}-1}{v+k_{\maxi}-2} \leq \frac{k_{\maxi}-1}{v+k_{\mini}-2}. \]
We note in particular that the ratio between the upper and lower bounds is $\frac{K_{\maxi}-1}{K_{\mini}-1}$.

It is almost immediate from the statement of the Welch bound that $\mu_{n,N} \leq \frac{K_{\maxi} -1}{v+K_{\mini}-2}$.
We now produce a sharper lower bound for $\mu_{n,N}$ than that given in Proposition \ref{mainthm0}. Observe that
$\mu_{n,N}$ is an increasing function in $N$ and $N \geq n K_{\mini}$. Recall also that $n \leq \frac{v(v-1)}{K_{\mini}(K_{\mini}-1)}$. Then
\begin{eqnarray*}
\mu_{n,N} & \geq & \sqrt{ \frac{ nK_{\mini} - n }{ n(nK_{\mini} - 1) } } \\
          & \geq & \sqrt{ \frac{ K_{\mini} - 1 }{ nK_{\mini} } } \\
          & \geq & \sqrt{ \frac{ K_{\mini} - 1 }{ K_{\mini} \frac{v(v-1)}{K_{\mini}(K_{\mini}-1)} } } \\
          & \geq & \sqrt{ \frac{ (K_{\mini} - 1)^{2} }{ v(v-1)} } \\
          & \geq & \sqrt{ \frac{ (K_{\mini} - 1)^{2} }{ v^{2}} } \\
          & \geq & \frac{ K_{\mini} - 1 }{ v} \\
          & \geq & \frac{K_{\mini} -1}{v+K_{\mini}-2}
\end{eqnarray*}

Using the ratio between the upper and lower bounds on inner products described above and the fact that
$\mu_{n,N}$ lies between the bounds, we obtain the following:
\[ \frac{K_{\mini}-1}{K_{\maxi}-1} \mu_{n,N} \leq  |\langle c_{i}, c_{j} \rangle| \leq \frac{K_{\maxi} -1}{K_{\mini}-1}\mu_{n,N}.\]

Finally, as in the definition of equiangularity, we solve for $\epsilon$, finding it to be the greater of  $\frac{K_{\maxi}-K_{\mini}}{K_{\maxi}-1}$ and $\frac{K_{\maxi} -K_{\mini}}{K_{\mini}-1}$, completing the proof.
\end{proof}

We note that in the special case that $K = \{k\}$, we achieve a $0$-equiangular frame.
Lemma \ref{normlemma} implies that such a frame is tight, and so we obtain an ETF.
This is the main result of Fickus et al., see Theorem 1 of \cite{Mixon}.

\subsection{Adaptations of Theorem \ref{mainexistence}}

In Proposition \ref{integercase} we showed that in the $(n,N)$ plane,
if we choose any ray through the origin (with integer slope $k$),
there exists a constant $C_{k}$ such that all points of the form $(n,kn)$ at
distance at least $C_{k}$ from the origin correspond to compressed sensing
matrices meeting the square-root bottleneck. We generalised this in Theorem
\ref{mainexistence} to rational slopes, and showed that points close to the ray of form
$(n,\lfloor kn\rfloor)$ corresponded to compressed sensing matrices. We give another
result in this section which shows that small perturbations to the underlying $\PBD$
can be used to obtain $n \times (k+ \epsilon)n$ compressed sensing matrices close to
the square-root bottleneck.

\begin{proposition}\label{smallextension}
If $\Phi$ is an $n \times kn$ compressed sensing matrix with $(\ell_{1}, t)$-recoverability
as considered in Proposition \ref{integercase}, then for each
$\epsilon \in \left[\frac{-1}{12k}, \frac{1}{12k} \right]$ there exists
an $n \times \lfloor(k+\epsilon)n\rfloor$ matrix $\Phi_{\epsilon}$ with $(\ell_{1}, t)$-recoverability.
\end{proposition}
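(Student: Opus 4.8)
The plan is to re-run the argument of Proposition \ref{integercase} with the single change that the prescribed number of columns is $\lfloor hn\rfloor$ rather than $kn$, and to observe that the hypothesis $|\epsilon|\le\frac{1}{12k}$ is precisely what prevents the relevant feasibility interval from collapsing. Since $\Phi$ is as in Proposition \ref{integercase}, we have $n>C_k$, where $C_k$ is the constant of Proposition \ref{designexistence} (enlarged if necessary). Set $\sigma=|\lfloor\epsilon n\rfloor|\le\frac{n}{12k}+1$, and treat the case $\lfloor\epsilon n\rfloor\ge 0$ (so $\lfloor hn\rfloor=kn+\sigma$), the case $\lfloor\epsilon n\rfloor<0$ being identical after interchanging the roles of $k-1$ and $k+1$ as at the end of the proof of Theorem \ref{mainexistence}. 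By Theorem \ref{PBDCS}, invoked exactly as in the proof of Proposition \ref{integercase}, it suffices to exhibit a $\PBD(v',\{k-1,k,k+1\},1)$ with $n$ blocks whose block sizes sum to $kn+\sigma=\lfloor(k+\epsilon)n\rfloor$; Construction \ref{con0} applied to such a design then yields a matrix $\Phi_\epsilon$ of the required shape with $(\ell_1,t)$-recoverability for all $t\le\sqrt n/4$, hence in particular for the value of $t$ attached to $\Phi$.

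Exactly as in Proposition \ref{integercase}, the equations $\sum_i\alpha_i=n$ and $\sum_i i\,\alpha_i=kn+\sigma$ together force $(\alpha_{k-1},\alpha_k,\alpha_{k+1})=(\tau,\,n-\sigma-2\tau,\,\sigma+\tau)$ for an integer parameter $\tau$. A direct substitution shows that the inequalities \eqref{ex1}, \eqref{ex2}, \eqref{ex3} of Proposition \ref{designexistence}, together with non-negativity of all three entries, hold precisely when $\frac{n-2\sigma}{4}\le\tau\le\frac{n-2\sigma}{3}$, an interval of length $\frac{n-2\sigma}{12}$. Because $k>3$ we have $\sigma\le\frac{n}{48}+1$, so for large $n$ this interval has length $\Theta(n)$. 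By the identity in Lemma \ref{binomId}, as $\tau$ ranges over the integers in this interval the value $F(\alpha_{k-1},\alpha_k,\alpha_{k+1})=F(0,n-\sigma,\sigma)+\tau$ ranges over an interval of $\Theta(n)$ consecutive integers, situated near $n\binom{k}{2}$. A triangular number $\binom{v'}{2}$ in this range has $v'=\Theta(k\sqrt n)$, and consecutive triangular numbers there differ by $\Theta(k\sqrt n)=o(n)$; hence, after enlarging $C_k$ so that $n$ is large compared with $k^2$, the interval contains some $\binom{v'}{2}$ with $v'>C_k$. The corresponding triple $(\tau,n-\sigma-2\tau,\sigma+\tau)$ is a feasible type for Proposition \ref{designexistence}, which then furnishes a $\PBD(v',\{k-1,k,k+1\},1)$ of that type; it has $n$ blocks and block-size-sum $kn+\sigma$, as required.

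The argument is essentially a transcription of the proof of Proposition \ref{integercase}, and the only point needing genuine attention is the bookkeeping in the previous paragraph: one must confirm that $\sigma=\lfloor\epsilon n\rfloor$ is small enough that the $\tau$-interval $[\frac{n-2\sigma}{4},\frac{n-2\sigma}{3}]$ retains length $\Theta(n)$, so that the $\Theta(\sqrt n)$ spacing of triangular numbers still guarantees a value of the form $\binom{v'}{2}$ inside the corresponding range of $F$. This is exactly where the bound $|\epsilon|\le\frac{1}{12k}$ is used; for perturbations of larger order the inequalities \eqref{ex1}--\eqref{ex3} fail on an interval of length $\Theta(n)$, as already observed in the proof of Theorem \ref{mainexistence}, and the method breaks down. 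Note also that, unlike in the $\alpha_{k-1}=\alpha_{k+1}$ situation of Proposition \ref{integercase}, the present parametrisation places no divisibility constraint on $\tau$, so no separate analysis of residue classes of $n$ is required.
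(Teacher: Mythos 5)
Your proof is correct, but it takes a genuinely different route from the paper's. You re-run the global machinery: parametrise the feasible types as $(\tau,\,n-\sigma-2\tau,\,\sigma+\tau)$, check that inequalities \eqref{ex1}--\eqref{ex3} carve out a $\tau$-interval of length $\frac{n-2\sigma}{12}=\Theta(n)$, and then use Lemma \ref{binomId} together with the $\Theta(k\sqrt{n})$ spacing of triangular numbers to locate a type to which Proposition \ref{designexistence} applies. This is in effect the proof of Theorem \ref{mainexistence} specialised to $|\sigma|\le\frac{n}{12k}$; note that your own interval computation shows the method survives for $\sigma$ up to about $\frac{n}{4}$ (and, with the second matrix $M$ of Theorem \ref{mainexistence}, up to $\frac{n}{2}$), so your closing claim that $|\epsilon|\le\frac{1}{12k}$ is ``precisely what prevents the feasibility interval from collapsing'' is a mischaracterisation --- for your argument the hypothesis merely makes the bookkeeping trivial. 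The paper instead gives a local perturbation argument: the identity $(2k-1)\binom{k}{2}=k\binom{k-1}{2}+(k-1)\binom{k+1}{2}$ shows that trading $2k-1$ blocks of size $k$ for $k$ blocks of size $k-1$ and $k-1$ blocks of size $k+1$ preserves both the number of blocks and the number of covered pairs (hence the same $v$ and equation \eqref{ex4}) while decreasing the column count by exactly $1$; starting from the type with the maximal number of size-$k$ blocks, this swap can be iterated roughly $\frac{n}{6}\cdot\frac{1}{2k-1}\approx\frac{n}{12k}$ times before the inequalities of Proposition \ref{designexistence} fail, which is exactly where the constant $\frac{1}{12k}$ in the statement comes from. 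The paper's route keeps the point set and the number of blocks fixed throughout and explains the constant; yours avoids the swap identity and reuses machinery already in place, at the cost of obscuring why $\frac{1}{12k}$ in particular appears.
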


\begin{proof}
First note the elementary identity
\[ (2k-1) \binom{k}{2} = k \binom{k-1}{2} + (k-1)\binom{k+1}{2}, \]
which tells us that $2k-1$ blocks of size $k$ cover the same number of
pairs of points as $k$ blocks of size $k-1$ and $k-1$ blocks of size $k+1$.
But observe that the sum of the block sizes on the left is $k(2k-1)$, whereas the
sum of the block sizes on the right is $2k^{2}-k-1$. That is, we can reduce the
number of columns by $1$ by swapping $2k-1$ blocks of size $k$ for $k$ blocks of size
$k-1$ and $k-1$ blocks of size $k+1$. The inverse operation increases the number of blocks by $1$.

Now, suppose that $\Phi$ is constructed with the maximum possible number of blocks of size $k$,
so $\Phi$ is of type $(\alpha_{k-1}, \alpha_{k}, \alpha_{k+1}) = \left( \frac{n}{4}+\eta, \frac{n}{2}-2\eta, \frac{n}{4}+\eta\right)$, where $\eta$ is close to zero. Then we can reduce the number of columns approximately $\frac{n}{6}\times \frac{1}{2k-1} \approx \frac{n}{12k}$ times before we reach our lower bound
$\frac{n}{3}$ on the number of blocks of size $k$. Likewise, beginning with the minimum number
of blocks of size $k$, we can increase the number of columns approximately $\frac{n}{12k}$ times.
\end{proof}

\section*{Acknowledgements}

The authors acknowledge the support of the Australian Research Council via grants DP120100790 and DP120103067.

\bibliographystyle{abbrv}
\flushleft{
\bibliography{C:/Bibliography/NewBiblio}

\begin{thebibliography}{10}

\bibitem{AlonSpencer}
N.~Alon and J.~H. Spencer.
\newblock {\em The probabilistic method}.
\newblock Wiley-Interscience Series in Discrete Mathematics and Optimization.
  John Wiley \& Sons Inc., Hoboken, NJ, third edition, 2008.
\newblock With an appendix on the life and work of Paul Erd{\H{o}}s.

\bibitem{BarberKuhnLoOsthus}
B.~{Barber}, D.~{K{\"u}hn}, A.~{Lo}, and D.~{Osthus}.
\newblock {Edge-decompositions of graphs with high minimum degree}.
\newblock {\em ArXiv e-prints}, Oct. 2014.

\bibitem{BJL}
T.~Beth, D.~Jungnickel, and H.~Lenz.
\newblock {\em Design theory. {V}ol. {I}}, volume~69 of {\em Encyclopedia of
  Mathematics and its Applications}.
\newblock Cambridge University Press, Cambridge, second edition, 1999.

\bibitem{Bourgain}
J.~Bourgain, S.~Dilworth, K.~Ford, S.~Konyagin, and D.~Kutzarova.
\newblock Explicit constructions of {RIP} matrices and related problems.
\newblock {\em Duke Math. J.}, 159(1):145--185, 2011.

\bibitem{CandesCS}
E.~J. Cand{\`e}s.
\newblock Compressive sampling.
\newblock In {\em International {C}ongress of {M}athematicians. {V}ol. {III}},
  pages 1433--1452. Eur. Math. Soc., Z\"urich, 2006.

\bibitem{CandesRombergTaoStableRecovery}
E.~J. Cand{\`e}s, J.~K. Romberg, and T.~Tao.
\newblock Stable signal recovery from incomplete and inaccurate measurements.
\newblock {\em Comm. Pure Appl. Math.}, 59(8):1207--1223, 2006.

\bibitem{CaroYuster}
Y.~Caro and R.~Yuster.
\newblock List decomposition of graphs.
\newblock {\em Discrete Math.}, 243(1-3):67--77, 2002.

\bibitem{StrengtheningHash}
C.~J. Colbourn, D.~Horsley, and V.~R. Syrotiuk.
\newblock Strengthening hash families and compressive sensing.
\newblock {\em J. Discrete Algorithms}, 16:170--186, 2012.

\bibitem{ColbournRodl}
C.~J. Colbourn and V.~R{\"o}dl.
\newblock Percentages in pairwise balanced designs.
\newblock {\em Discrete Math.}, 77(1-3):57--63, 1989.
\newblock Combinatorial designs---a tribute to Haim Hanani.

\bibitem{ColbournRosa}
C.~J. Colbourn and A.~Rosa.
\newblock {\em Triple systems}.
\newblock Oxford Mathematical Monographs. The Clarendon Press Oxford University
  Press, New York, 1999.

\bibitem{deLauneyFlannery}
W.~{d}e Launey and D.~Flannery.
\newblock {\em Algebraic design theory}.
\newblock Mathematical Surveys and Monographs, vol. 175. American Mathematical
  Society, Providence, RI, 2011.

\bibitem{deVore}
R.~A. DeVore.
\newblock Deterministic constructions of compressed sensing matrices.
\newblock {\em J. Complexity}, 23(4-6):918--925, 2007.

\bibitem{DonohoCS}
D.~Donoho.
\newblock Compressed sensing.
\newblock {\em Information Theory, IEEE Transactions on}, 52(4):1289--1306,
  April 2006.

\bibitem{DonohoHuo}
D.~Donoho and X.~Huo.
\newblock Uncertainty principles and ideal atomic decomposition.
\newblock {\em Information Theory, IEEE Transactions on}, 47(7):2845--2862, Nov
  2001.

\bibitem{Mixon}
M.~Fickus, D.~G. Mixon, and J.~C. Tremain.
\newblock Steiner equiangular tight frames.
\newblock {\em Linear Algebra Appl.}, 436(5):1014--1027, 2012.

\bibitem{Gustavsson}
T.~Gustavsson.
\newblock {\em Decompositions of Large Graphs and Digraphs with high Minimum
  Degree}.
\newblock Ph.D. thesis, Stockholm University, 1991.

\bibitem{LiebeckJames}
G.~James and M.~Liebeck.
\newblock {\em Representations and characters of groups}.
\newblock Cambridge University Press, New York, second edition, 2001.

\bibitem{Keevash}
P.~{Keevash}.
\newblock {The existence of designs}.
\newblock {\em ArXiv e-prints}, Jan. 2014.

\bibitem{KlappeneckerMUBS}
A.~Klappenecker and M.~R{\"o}tteler.
\newblock Constructions of mutually unbiased bases.
\newblock In {\em Finite fields and applications}, volume 2948 of {\em Lecture
  Notes in Comput. Sci.}, pages 137--144. Springer, Berlin, 2004.

\bibitem{Natarajan}
B.~K. Natarajan.
\newblock Sparse approximate solutions to linear systems.
\newblock {\em SIAM J. Comput.}, 24(2):227--234, 1995.

\bibitem{ExistenceETFs}
M.~A. Sustik, J.~A. Tropp, I.~S. Dhillon, and R.~W. Heath, Jr.
\newblock On the existence of equiangular tight frames.
\newblock {\em Linear Algebra Appl.}, 426(2-3):619--635, 2007.

\bibitem{Welch}
L.~R. Welch.
\newblock Lower bounds on the maximum cross correlation of signals.
\newblock {\em IEEE Trans. Inform. Theory}, 20(3):397--399, 1974.

\bibitem{WilsonIII}
R.~M. Wilson.
\newblock An existence theory for pairwise balanced designs, iii: Proof of the
  existence conjectures.
\newblock {\em Journal of Combinatorial Theory, Series A}, 18(1):71 -- 79,
  1975.

\end{thebibliography}
}

\end{document}